\newtheorem{proposition}{Proposition}
\newtheorem{corollary}{Corollary}
\newtheorem{theorem}{Theorem}
\newtheorem{remark}{Remark}
\newtheorem{question}{Question}
\newtheorem{conjecture}{Conjecture}
\DeclareMathOperator{\const}{const}
\newcommand{\prob}[1]{\mathsf{#1}}
\begin{document}
\title{Growth of bilinear maps III: Decidability}
\author{Vuong Bui\footnote{LIRMM, Universit\'e de Montpellier, CNRS, 161 Rue Ada, 34095 Montpellier, France and UET, Vietnam National University, Hanoi, 144 Xuan Thuy Street, Hanoi 100000, Vietnam (\texttt{bui.vuong@yandex.ru})}}
\date{}

\maketitle
\begin{abstract}
	The following notion of growth rate can be seen as a generalization of joint spectral radius: Given a bilinear map $*:\mathbb R^d\times\mathbb R^d\to\mathbb R^d$ with nonnegative coefficients and a nonnegative vector $s\in\mathbb R^d$, denote by $g(n)$ the largest possible entry of a vector obtained by combining $n$ instances of $s$ using $n-1$ applications of $*$. Let $\lambda$ denote the growth rate $\limsup_{n\to\infty} \sqrt[n]{g(n)}$. Rosenfeld showed that the problem of checking $\lambda\le 1$ is undecidable by reducing the problem of joint spectral radius. 

	In this article, we provide a simpler reduction using the observation that matrix multiplication is actually a bilinear map. Moreover, we extend the reduction to show that checking $\lambda\le 1$ is still undecidable even if $s$ is positive. If there is no restriction on the signs, we can also show that the problem of checking if the system can produce a zero vector is undecidable by reducing the problem of checking the mortality of a pair of matrices. This answers a question asked by Rosenfeld. Beside that, we confirm a remark of Rosenfeld that the problem does not become harder when we introduce more bilinear maps and more starting vectors. 

	It is known that if the vector $s$ is strictly positive, then the limit superior $\lambda$ is actually a limit. However, we show that when $s$ is only nonnegative, the problem of checking the existence of the limit is undecidable. 
	This also answers a question asked by Rosenfeld.

	We provide a formula for the growth rate $\lambda$ in terms of the diagonals of matrices corresponding to a special structure called ``linear pattern''. A condition is given so that the limit $\lambda$ exists. This actually provides a simpler proof for the existence of the limit $\lambda$ when $s>0$. 
	An important corollary of the formula is the computability of the growth rate, which answers another question by Rosenfeld. 
	Another corollary is that the problem of checking $\lambda\le 1$ is reducible to the problem of joint spectral radius, via the halting problem, i.e., the two problems are Turing equivalent. Also, we relate the finiteness property of a set of matrices to the notion ``linear pattern'' of a bilinear system.

\end{abstract}
\section{Introduction}
Given a bilinear map $*:\mathbb R^d\times\mathbb R^d\to\mathbb R^d$ with \emph{nonnegative} coefficients and a \emph{positive} vector $s\in\mathbb R^d$, denote by $g(n)$ the largest possible entry of a vector obtained by combining $n$ instances of $s$ using $n-1$ applications of $*$. For example, all the combinations of $4$ instances of $s$ are
\[
	s*(s*(s*s)), s*((s*s)*s), (s*s)*(s*s), (s*(s*s))*s, ((s*s)*s)*s.
\]
It was shown in \cite{bui2021growth} that the following growth rate exists:
\[
	\lambda=\lim_{n\to\infty} \sqrt[n]{g(n)}.
\]

When the entries of $s$ are not positive but only \emph{nonnegative}, the limit $\lambda$ may no longer exist. However, relaxing the requirements on the signs in this way is often asked in applications. Therefore, Rosenfeld \cite{rosenfeld2022undecidable} extends the notion of the growth rate $\lambda$ for the case $s$ is nonnegative by defining
\[
	\lambda=\limsup_{n\to\infty} \sqrt[n]{g(n)},
\]
which is called the \emph{growth rate of the bilinear system} $(*,s)$.

Let us call the former setting the positive setting and the latter setting the nonnegative setting (with respect to the sign of $s$).

The study of this problem was first started by Rote \cite{rote2019maximum} with the maximum number of minimal dominating sets in a tree of $n$ leaves as an example. Later on, a richer set of applications to the maximum number of different types of dominating sets, perfect codes, different types of matchings, and maximal irredundant sets in a tree was given by Rosenfeld \cite{rosenfeld2021growth}. A restricted class of the problem was shown to be decidable in \cite{bui2022replacements}, where the growth rate can be computed precisely provided that the entries are rational.

For application purpose, estimating $\lambda$ is a natural problem. In \cite{bui2021growth2} the limit in the positive setting can be approximated to an arbitrary precision. In \cite{rosenfeld2021growth} the growth rate in the nonnegative setting was shown to be upper semi-computable, i.e., we can generate a sequence of upper bounds converging to $\lambda$. In this article, we show that the growth rate in the nonnegative setting is also lower semi-computable, that is the growth rate is computable. However, it still remains the problem of checking if $\lambda\le 1$. In \cite{rosenfeld2022undecidable} Rosenfeld shows that checking $\lambda\le 1$ is undecidable for the nonnegative setting by reducing the problem of checking $\rho\le 1$ for the joint spectral radius $\rho$. The notion of joint spectral radius was first introduced in \cite{rota1960note} and the growth of bilinear maps can be seen as a generalization.

In this paper, we provide another proof with a simpler reduction using the observation that matrix multiplication is also a bilinear map, as in Theorem~\ref{thm:undecidable-estimate}. The reduction is quite natural, and the products of the matrices can be found in an embedded form in the resulting vectors. Moreover, we extend the reduction to show that checking $\lambda\le 1$ is still undecidable in the positive setting. We prove its undecidability by reducing the problem of checking $\rho\le 1$ for the joint spectral radius $\rho$ of a pair of \emph{positive} matrices, as in Section~\ref{sec:undecidable-positive-setting}. The strict positivity has however brought some complications.

The undecidability of the problem for joint spectral radius is actually proved by
\[
	\prob{HP} \le\dots\le \prob{PFAE} \le \prob{JSR},
\]
where $\prob{HP}$ denotes the halting problem, $\prob{PFAE}$ denotes the problem of probabilistic finite automaton emptiness and $\prob{JSR}$ denotes the problem of joint spectral radius. (We denote $A\le B$ if Problem $A$ can be reduced to Problem $B$.) We do not fill the dots between $\prob{HP}$ and $\prob{PFAE}$ since two problems can be filled there: (i) Post's Correspondence Problem, or (ii) Halting Problem for $2$-Counter Machines (see \cite{rote2024probabilistic} for more details).

To be more precise, we mean by $\prob{JSR}$ the problem of checking $\rho\le 1$.
Note that actually all these problems are Turing equivalent (i.e. each is reducible to any other) since we have a reduction from $\prob{JSR}$ to $\prob{HP}$ by the joint spectral radius theorem (conjectured in \cite{daubechies1992sets}, first proved in \cite{berger1992bounded}), which states that for a finite set $\Sigma$ of matrices we have
\begin{equation}\label{eq:jsr-theorem}
	\rho(\Sigma)=\sup_n\max_{A_1,\dots,A_n\in\Sigma}\sqrt[n]{\rho(A_1\dots A_n)},
\end{equation}
where $\rho$ denotes both the joint spectral radius of a set of matrices and the ordinary spectral radius of a matrix, depending on the argument. Indeed, we just run the program that looks for a sequence of matrices whose product has the spectral radius greater than $1$. The program does not stop if and only if $\rho(\Sigma)\le 1$. One may say that the problem is semidecidable. (Note that the problem for spectral radius ($\prob{SR}$) is decidable by Tarski's decision procedure for the
first-order theory of the reals.)

On the other hand, a formula of $\lambda$ in Section~\ref{sec:nonnegative-setting} allows a reduction from checking $\lambda\le 1$ to the halting problem. We show the formula in the following form, which looks similar to the joint spectral radius theorem:
\begin{equation}\label{eq:formula-for-lambda}
	\lambda=\sup_n \max_{\substack{\text{linear pattern $P$}\\ |P|=n}} \sqrt[n]{\rho(M(P))}.
\end{equation}
We do not explain the terms in detail, which is done in Section~\ref{sec:nonnegative-setting}, but we may say roughly that a linear pattern is a sequence $x_n$ for $n=0,1,2,\dots$ so that $x_0=s$ and $x_n$ for $n\ge 1$ is a combination of some instances of $s$ and only \emph{one} instance of $x_{n-1}$. The notation $|P|$ denotes the number of instances of $s$ and the matrix $M=M(P)$ represents the linear relation $x_n=Mx_{n-1}$ for every $n\ge 1$. Note that there is a finite number of linear patterns of given size $|P|=n$ and $M(P)$ is a matrix that can be computed for $P$. The reduction from checking $\lambda\le 1$ to the halting problem is done similiar to the one for the problem of checking $\rho(\Sigma)\le 1$.

We have established the relation of these problems to the problem of the growth rate of a bilinear system ($\prob{GRBS}$). An interesting point is that using reductions of the same kind as the one for $\prob{JSR}\le \prob{GRBS}$ we can show that the problem for the growth rate does not become harder when multiple operators and multiple starting vectors are allowed. This was first remarked by Rosenfeld in \cite{rosenfeld2022undecidable}. Let us call it the joint growth rate of a bilinear system ($\prob{JGRBS}$). In total, we have
\[
	\prob{SR} < \prob{HP} = \prob{PFAE} = \prob{JSR} = \prob{GRBS} = \prob{JGRBS},
\]
where $A<B$ means $A\le B$ but we do not have $B\le A$, and $A=B$ means $A\le B$ and $B\le A$, that is each of $A,B$ is reducible to the other, i.e., they are Turing equivalent. 

Note that we still do not yet have a natural reduction from $\prob{GRBS}$ to $\prob{JSR}$ as the one for $\prob{JSR}\le \prob{GRBS}$. Such a reduction is very desirable with some consequences, as discussed in Section~\ref{sec:applications}.

In \cite{rosenfeld2022undecidable} Rosenfeld asks the following question: Suppose the coefficients of $*$ and the entries of $s$ have no condition on the signs (they can even be complex), then is the problem of checking if the system can produce a zero vector decidable? A negative answer is given in Theorem~\ref{thm:undecidable-mortal}. The problem to be reduced is checking the mortality of a pair of matrices. 

Since the reduction for $\prob{JSR}\le \prob{GRBS}$ is quite natural, we can relate the finiteness property \cite{lagarias1995finiteness} for the joint spectral radius to a result on whether the rate of a linear pattern can attain the growth rate. 
A set $\Sigma$ of matrices is said to have the finiteness property if the supremum in \eqref{eq:jsr-theorem} for the joint spectral radius theorem is attainable, that is there exist $A_1,\dots,A_n\in\Sigma$ so that $\sqrt[n]{\rho(A_1\dots A_n)}=\rho(\Sigma)$. Meanwhile, the rate of a linear pattern $P$ is $\sqrt[|P|]{\rho(M(P))}$ and the supremum in \eqref{eq:formula-for-lambda} is not always attainable, that is there exists a system where no linear pattern has the same rate as the growth rate (e.g. see \cite{bui2021growth}). 
The relation of the finiteness property and whether a linear pattern attains the growth rate is presented in Section~\ref{sec:applications}.

Checking if the limit superior $\lambda$ is actually a limit is also an interesting problem, whose decidability was asked by Rosenfeld in a correspondence. Theorem~\ref{thm:undecidable-validity} shows that it is undecidable by reducing the problem of checking $\lambda\le 1$.
During the course, there is a transform of $(*,s)$ to a new system with the corresponding function $g'(n)$ so that for every $m\ge 1$ we have $g'(2m)=g(m)$ and $g'(2m+1)=0$.

As an attempt to study the nonnegative setting, we extend the formula of $\lambda$ in \cite{bui2021growth2} to the nonnegative setting in Section~\ref{sec:nonnegative-setting}. Using the formula, we give a condition so that the limit always exists. This actually serves as a proof of the existence of the limit $\lambda$ in the positive setting, which is simpler than the proof in \cite{bui2021growth}. Another corollary is a transform so that the new system still has the same growth rate as the original one but with the existence of the limit. In fact, the computability of the growth rate in the nonnegative setting is derived from the formula, as in Theorem~\ref{thm:computable}.

\section{Checking $\lambda\le 1$ is undecidable}
\label{sec:reduction}
We provide in this section a simpler proof of the following result.
\begin{theorem}[Rosenfeld \cite{rosenfeld2022undecidable}]
\label{thm:undecidable-estimate}
	The problem of checking if $\lambda\le 1$ for the nonnegative setting is undecidable.
\end{theorem}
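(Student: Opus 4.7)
The plan is to reduce $JSR$ (checking $\rho(\Sigma)\le 1$ for a finite set of matrices) to the problem of checking $\lambda\le 1$ in the nonnegative setting, using the construction already given before the statement. Since it is known that $JSR$ is undecidable even when restricted to pairs of matrices with nonnegative integer entries (Blondel--Tsitsiklis), I start with such a pair $A,B$ and feed it into the common construction. All nonzero entries appearing in $s$ and in the coefficients of $*$ are then either entries of $A,B$ or the constants $0,1$, so the resulting system $(*,s)$ lies in the nonnegative setting, as required. Any algorithm deciding $\lambda\le 1$ would then decide $\rho(\{A,B\})\le 1$.

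Next I would analyze $g(n)$ with the help of Proposition \ref{prop:buffer-analysis}. For $n\ge 4$ with $n\not\equiv 0\pmod 3$, the only possibly nonzero block is $R_C$, and the proposition forces $v_{R_C}=0$, so $g(n)=0$. For $n=3m$, the proposition says that every combination yields either a zero vector or (under $\Gamma$) a product of exactly $m$ matrices from $\{A,B\}$, and conversely every such product is realized by some combination. Consequently, up to the bijective entrywise embedding $\Gamma,\tilde\Gamma$, one has
\[
  g(3m) \;=\; \max_{M_1,\dots,M_m\in\{A,B\}} \bigl\lVert M_1\cdots M_m\bigr\rVert_{\max}.
\]
Taking the $(3m)$-th root and passing to the limit superior, the joint spectral radius theorem gives
\[
  \lambda \;=\; \limsup_{n\to\infty}\sqrt[n]{g(n)} \;=\; \sqrt[3]{\rho(\{A,B\})}.
\]

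From this identity the reduction is immediate: $\lambda\le 1$ if and only if $\rho(\{A,B\})\le 1$. Since the latter is undecidable, so is the former. The main technical point to be careful about is matching the maximum norm on matrices with the maximum entry of the associated vector under the embedding $\tilde\Gamma$; because $\Gamma$ is just a reshaping of $d^2$ coordinates, the two norms agree exactly, and no factor that could distort the limit enters the computation. The only other minor check is that the very small values of $n$ (namely $n\in\{1,2,3\}$ and $n=4,5$) do not affect the $\limsup$, which is clear because they contribute only finitely many terms whose $n$-th roots stay bounded.
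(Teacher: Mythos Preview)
Your proposal is correct and follows essentially the same route as the paper: reduce from the undecidable problem $\rho(\{A,B\})\le 1$ for a pair of nonnegative matrices via the common construction, use Proposition~\ref{prop:buffer-analysis} to identify $g(3m)$ with $\max_{M_1,\dots,M_m\in\{A,B\}}\|M_1\cdots M_m\|_{\max}$ and $g(n)=0$ for large $n\not\equiv 0\pmod 3$, and conclude $\lambda=\sqrt[3]{\rho(\{A,B\})}$. Your extra remarks on the entrywise norm matching and on the irrelevance of small $n$ are correct and simply make explicit what the paper leaves implicit; the only minor quibble is that the final limit uses the \emph{definition} of the joint spectral radius rather than the ``joint spectral radius theorem'' (the supremum formula \eqref{eq:jsr-theorem}).
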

The proof in \cite{rosenfeld2022undecidable} reduces the problem of joint spectral radius to this problem. Our proof also reduces from joint spectral radius but in a much simpler way, by observing that matrix multiplication is also a bilinear map.

The reduction in this section is quite important in the sense that its variants appear throughout the article.
Before going to our proof, we remind the readers the notion of the joint spectral radius.

Given a set of matrices $\Sigma$ in $\mathbb R^d$, the joint spectral radius $\rho(\Sigma)$ of $\Sigma$ is defined to be the limit
\[
	\rho(\Sigma)=\lim_{n\to\infty} \sqrt[n]{\max_{M_1,\dots,M_n\in\Sigma} \|M_1\dots M_n\|}.
\]
The limit was introduced with a proof of its existence in \cite{rota1960note} and it is independent of the norm. For convenience, we let the norm be the maximum norm, i.e., the largest absolute value of an entry in the matrix.

\begin{proof}[Simpler proof of Theorem \ref{thm:undecidable-estimate}]
Consider the problem of checking if $\rho(\{A,B\})\le 1$ for the joint spectral radius $\rho(\{A,B\})$ of a pair of nonnegative matrices $A,B$ in $\mathbb R^{d\times d}$, which is known to be undecidable \cite[Theorem~$2$]{blondel2000boundedness}. We reduce this problem to the problem of checking if $\lambda\le 1$ for the bilinear system $(*,s)$ constructed as follows.

We use some embedding of a $d\times d$ matrix $C$ to a vector $v$ in the space $\mathbb R^{d^2}$, and allow ourselves to write $(C,i,j)$ to present a vector in $\mathbb R^{d^2+2}$, where $C$ is embedded in the first $d^2$ dimensions and $i,j$ are the two last dimensions.

Given a pair of matrices $A,B$ in $\mathbb R^d$, we consider the system $(*,s)$ with the $(d^2+2)$-dimensional vector $s=(\mathbf O,1,0)$ for the zero matrix $\mathbf O$ and $*:\mathbb R^{d^2+2}\times\mathbb R^{d^2+2}\to\mathbb R^{d^2+2}$ presented by
\begin{equation}\label{eq:simplified-version}
    \begin{pmatrix}
        C\\i\\j
    \end{pmatrix}
    *
    \begin{pmatrix}
        C'\\i'\\j'
    \end{pmatrix}
    =
    \begin{pmatrix}
        CC' + ij'A + ji'B\\
        0\\
        ii'
    \end{pmatrix}
    ,
\end{equation}
where $CC'$ is the usual matrix multiplication. The key point here is that a matrix multiplication in $\mathbb R^d$ is also a bilinear map in $\mathbb R^{d^2}\times\mathbb R^{d^2}\to\mathbb R^{d^2}$.

Let us write down some beginning combinations:
\begin{align} \label{eq:some-beginning-combinations}
    \begin{split}
    s&=(\mathbf O,1,0)\\
    s*s&=(\mathbf O,0,1)\\
    s*(s*s)&= (\mathbf O\mathbf O + 1\cdot 1\cdot A + 0\cdot 0\cdot B, 0, 0) = (A,0,0)\\
    (s*s)*s&= (\mathbf O\mathbf O + 0\cdot 0\cdot A + 1\cdot 1\cdot B, 0, 0) = (B,0,0)\\
    (s*s)*(s*s)&=(\mathbf O\mathbf O + 0\cdot 1 \cdot A + 1\cdot 0\cdot B, 0, 0) = (\mathbf O,0,0).
    \end{split}
\end{align}

Let $n$ be the number of instances of $s$ in a combination with the resulting vector $v$.
Obviously, $v_{d^2+1}$ (the index of $i$) is nonzero for only $n=1$, and $v_{d^2+2}$ (the index of $j$) is nonzero for only $n=2$. 
It follows that the sum $ij'A + ji'B$ in \eqref{eq:simplified-version} is nonzero only for $n=3$. In other words, whenever $n\ge 4$, the expression for the first $d^2$ dimensions in \eqref{eq:simplified-version} has the recursive form $CC'$. Together with \eqref{eq:some-beginning-combinations}, we have the matrix form $M$ of the first $d^2$ dimensions is the product of matrices from $\{\mathbf O,A,B\}$. If $M$ is not zero, then $M$ is the product of matrices from $\{A,B\}$ where the numbers of instances $m_A,m_B$ of $A,B$ respectively correspond to the number of the occurrences of $s*(s*s)$ and $(s*s)*s$, and $3m_A+3m_B=n$. Note that the last $2$ dimensions of these combinations are always zero, due to $n\ge 3$.

    On the other hand, for any sequence of matrices $M_1,\dots,M_t\in\{A,B\}$, the combination
    \[
        (S_1*(S_2*(S_3*(\dots*(S_{t-1}*S_t)\dots),
    \]
    where $S_k=(s*(s*s))$ if $M_k=A$ and $S_k=((s*s)*s)$ if $M_k=B$, for $k=1,\dots,t$, gives a vector whose first $d^2$ dimensions embed the matrix $M_1\dots M_t$, and the last two dimensions are zero.

	It follows from the two above directions that
	\[
		g(3t)=\max_{M_1,\dots,M_t\in\{A,B\}} \|M_1\dots M_t\|,
	\]
 where $\|\cdot\|$ denotes the maximum norm.

	Also, for $n\ge 3$ and $n$ not divisible by $3$, we have
	\[
		g(n)=0.
	\]

	Therefore,
	\[
		\lambda=\sqrt[3]{\rho(\{A,B\})}.
	\]

	We have reduced the problem of the joint spectral radius to the problem of the growth rate. The conclusion on the undecidability follows.
\end{proof}

The variant of checking $\lambda=1$ is also undecidable due to the undecidability of the corresponding problem of checking $\rho= 1$ for the joint spectral radius. In fact, we can reduce the problem of checking $\lambda\le 1$ to the problem of checking $\lambda=1$ by adding an extra dimension that is always $1$. However, the question for $\rho\ge 1$ still remains open (see \cite[Section 2.2.3]{jungers2009joint} for a discussion):
\begin{conjecture}[Blondel and Tsitsiklis 2000 \cite{blondel2000boundedness}]\label{con:rho>=1}
	It is undecidable to check if $\rho\ge 1$ for the joint spectral radius~$\rho$.
\end{conjecture}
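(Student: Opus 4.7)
The plan is to reduce a known undecidable problem---most naturally probabilistic finite automaton (PFA) emptiness or the halting problem---to checking whether $\rho(\Sigma)\ge 1$. The Blondel--Tsitsiklis PFA reduction used to establish the undecidability of $\rho\le 1$ produces, from a PFA $\mathcal A$, a finite set $\Sigma_{\mathcal A}$ such that a ``yes'' instance yields $\rho(\Sigma_{\mathcal A})>1$ while a ``no'' instance yields $\rho(\Sigma_{\mathcal A})\le 1$. If one could strengthen the ``no'' conclusion to $\rho(\Sigma_{\mathcal A})<1$ with a computable gap, the same reduction would immediately prove the conjecture, so the first step is to inspect whether the existing construction already achieves such a gap. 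I expect it does not, since PFA acceptance probabilities can cluster arbitrarily close to the threshold from below, and under the reduction these translate into values of $\rho(\Sigma_{\mathcal A})$ clustering at $1$.

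The next step would be to attempt a quantitative sharpening. One option is to precompose the PFA with an amplification gadget---additional auxiliary matrices or a dimension lift---that boosts any near-accepting probability above a fixed threshold, so that ``almost-accepting'' words cease to produce near-unit spectral radii. Another is to exploit algebraic rigidity by restricting to integer or algebraic entries and arguing that in the ``no'' case every finite product $M_1\dots M_n$ of matrices from $\Sigma$ satisfies $\rho(M_1\dots M_n)^{1/n}\le 1-\delta(\Sigma)$ for a computable $\delta(\Sigma)>0$; combined with the joint spectral radius theorem, this would finish the reduction. Both directions run into the same trade-off: any scaling that forces a gap below $1$ in the ``no'' case also tends to push the ``yes'' value below $1$, so the amplification and the gap must be delicately balanced.

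The main obstacle is precisely the boundary behavior at $\rho=1$. By the joint spectral radius theorem, $\{\Sigma:\rho(\Sigma)>1\}$ is recursively enumerable, so checking $\rho>1$ is already Turing reducible to $HP$; but there is no known semi-algorithm for detecting $\rho<1$ without structural assumptions, and there is no \emph{a priori} bound on how close $\rho(\Sigma)$ can come to $1$ while still remaining strictly smaller. A proof of Conjecture~\ref{con:rho>=1} therefore seems to require either a genuinely new reduction that forces a uniform gap in one direction---perhaps exploiting the bilinear framework and the formula \eqref{eq:formula-for-lambda} of this paper, where the rigidity enforced by nonnegative coefficients may be harnessed---or a direct undecidability argument that bypasses the PFA intermediate altogether. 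Without such a new idea, I would expect the conjecture to remain genuinely open.
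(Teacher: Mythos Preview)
The paper does not prove this statement: it is explicitly stated as an open conjecture (Conjecture~\ref{con:rho>=1}, attributed to \cite{blondel2000boundedness}), with the surrounding text noting that ``the question for $\rho\ge 1$ still remains open.'' There is therefore no proof in the paper to compare your proposal against.

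Your proposal is not a proof either, and you recognize this yourself in the final paragraph. What you have written is a reasonable diagnosis of why the standard Blondel--Tsitsiklis PFA reduction does not immediately settle the $\rho\ge 1$ question: the ``no'' instances in that reduction only give $\rho\le 1$, not $\rho<1$ with a computable gap, so one cannot simply flip the inequality. Your discussion of possible amplification gadgets and the boundary obstacle at $\rho=1$ is accurate commentary, but it does not constitute progress toward a proof. In short, your conclusion that the conjecture appears genuinely open is consistent with the paper's own stance, and your write-up should be understood as an explanation of the difficulty rather than a proof attempt.
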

The conjecture has applications in the stability of dynamical systems. If it holds, then the problem of checking $\lambda\ge 1$ is also undecidable.

\section{Checking the mortality is undecidable}
\label{sec:mortality}
Problems of other properties of a pair of matrices can be also reduced to the corresponding ones of a bilinear system. The following theorem is one example.

\begin{theorem} \label{thm:undecidable-mortal}
	When there is no condition on the signs of the coefficients and the starting entries, the problem of checking if the system can produce a zero vector is undecidable.
\end{theorem}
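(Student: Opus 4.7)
The plan is to reduce the mortality problem for a pair of matrices---given $A,B\in\mathbb R^{d\times d}$, do there exist $m\ge 1$ and $M_1,\ldots,M_m\in\{A,B\}$ with $M_1\cdots M_m=0$?---to zero-vector producibility in a bilinear system. Mortality is a classical undecidable problem, even for pairs of matrices with signed integer entries, and since the theorem allows arbitrary signs in $s$ and $*$, we may work with such $A,B$ directly.

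Following the remark preceding the statement, my starting point is the ``common construction'' of Section~\ref{sec:reduction} used already for Theorem~\ref{thm:undecidable-estimate}. The forward direction of the reduction is then immediate from Proposition~\ref{prop:buffer-analysis}: if $M_1\cdots M_m=0$ for some word over $\{A,B\}$, the ``on the other hand'' clause of the proposition exhibits a combination of $n=3m$ instances of $s$ whose output vector $v$ satisfies $\Gamma(v_{R_C})=M_1\cdots M_m=0$, so $v_{R_C}=0$. The explicit formulas for $*$ also force $v_{R_A}=v_{R_B}=v_i=v_j=0$ whenever $n\ge 3$, hence $v=0$.

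The main obstacle, and the point where the new freedom in signs is essential, is the converse. In the plain nonnegative construction many combinations already yield $v=0$ for purely structural, $A,B$-independent reasons: $(s*s)*(s*s)$ at $n=4$, or any combination whose size is not a multiple of $3$, gives $v=0$ directly by Proposition~\ref{prop:buffer-analysis}. To suppress these spurious zeros I plan to augment the construction with a small number of auxiliary ``parse-witness'' coordinates whose bilinear updates use signed (and, if convenient, complex) entries. The witness acts as an additive error accumulator along the binary tree of the combination: at every internal node whose size lies outside the canonical set $\{1,2\}\cup 3\mathbb Z_{\ge 1}$ appearing in the ``moreover'' clause of Proposition~\ref{prop:buffer-analysis} it injects a complex signal, for instance a multiple of $\omega-1$ or $\omega^2-1$ with $\omega=e^{2\pi i/3}$, and at every canonical node it injects nothing. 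Because nonnegative integer combinations of $\omega-1$ and $\omega^2-1$ vanish only when all coefficients are zero, this accumulator is identically zero on canonical combinations and nonzero on every non-canonical one, so it cancels in exactly the cases where we want the zero vector to be possible and blocks it in all other cases.

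With the witness in place, the same inductive argument as in Proposition~\ref{prop:buffer-analysis} shows that $v=0$ if and only if the combination is canonical of size $n=3m$ and the associated product $M_1\cdots M_m$ equals the zero matrix; hence zero-vector producibility of $(*,s)$ coincides with mortality of $\{A,B\}$, giving the undecidability. The most delicate point I expect is the bilinear implementation of the accumulator: one must propagate the sub-combination errors from $x$ and $y$ into $x*y$ without resorting to an always-nonzero ``constant'' coordinate, which would itself preclude $v=0$. The cleanest route I see is to piggy-back on the existing $i,j$ markers and matrix slots---which are already nonzero in precisely the small cases where propagation matters---so that no permanent constant coordinate is required, and to verify the desired behaviour by the same case analysis on $n_x,n_y$ used to prove Proposition~\ref{prop:buffer-analysis}.
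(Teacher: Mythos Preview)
Your overall plan coincides with the paper's: reduce matrix mortality for a pair $A,B$ to zero-vector producibility, start from the common construction of Section~\ref{sec:reduction}, and adjoin a small block of ``witness'' coordinates (using the newly allowed negative entries) whose sole job is to kill the spurious zeros that Proposition~\ref{prop:buffer-analysis} leaves behind. The forward direction you give is exactly the paper's.

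Where your proposal falls short is precisely the point you yourself flag as ``most delicate'': the accumulator. An additive error register that sums complex penalties $\omega-1$, $\omega^2-1$ over all bad internal nodes is not a bilinear quantity. To carry $E(x*y)=E(x)+E(y)+\text{penalty}$ through a bilinear rule you would need a coordinate that stays nonzero along every branch to multiply against, and---as you note---any such coordinate already forbids $v=0$. ``Piggy-backing on the existing $i,j$ markers'' does not rescue this, because $v_i$ and $v_j$ vanish for all $n\ge 3$, so they cannot propagate an error stored in a subtree of size $\ge 3$ up to its parent. As it stands your accumulator is an intention, not a construction, and the natural realizations of it fail for the structural reason you identify.

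The paper's mechanism avoids the additive trap altogether. It introduces three real coordinates: two residue trackers that are meant to be nonzero exactly when the current size is $\equiv 1$ or $\equiv 2 \pmod 3$ (these directly block $v=0$ away from multiples of~$3$), and a third coordinate whose update is the bilinear expression
\[
x_{3d^2+3}\,y_{3d^2+4}+x_{3d^2+4}\,y_{3d^2+3}+x_{3d^2+5}\,y_{3d^2+5}-x_i y_j-x_j y_i .
\]
Here the negative terms cancel the base case $n=3$, the first two positive terms fire whenever a multiple of~$3$ is split into two non-multiples of~$3$ that are not the canonical $\{1,2\}$, and the last term is multiplicative rather than additive---so no permanent ``constant'' coordinate is needed. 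In other words, the paper trades your additive complex accumulator for a mod-$3$ state machine plus a sign-cancelled detector, entirely over the reals. If you want to complete your own route, you will need an analogous device: something that records ``badness'' bilinearly without relying on a coordinate that is nonzero on all admissible outputs.
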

\begin{proof}
	We reduce to this problem the problem of checking if a pair of matrices $A,B$ is mortal, that is checking if there exists a sequence of matrices $M_1,\dots,M_k$ drawn from $\{A,B\}$ for some $k$ so that $M_1\dots M_k$ is a zero matrix. The problem of mortality for a pair of matrices is known to be undecidable \cite{blondel1997pair}. 

    For the space $\mathbb R^d$ of $A,B$, we consider the space $\mathbb R^{d^2+2}$ with an embedding of $d\times d$ matrices into the first $d^2$ dimensions. One may write $(C,i,j)$ where $C$ is a matrix to present a vector in $\mathbb R^{d^2+2}$.

    Consider the system $(*,s)$ with the starting vector $s=(\mathbf I,1,0)$ where $\mathbf I$ is the identity  matrix, and $*$ defined by
    \begin{equation*}
        \begin{pmatrix}
        C\\
        i\\
        j   
        \end{pmatrix}
        *
        \begin{pmatrix}
            C'\\
            i'\\
            j'
        \end{pmatrix}
        =
        \begin{pmatrix}
            CC' + ij'(A-\mathbf I) + ji'(B-\mathbf I)\\
            0\\
            ii'
        \end{pmatrix}
        .
    \end{equation*}

    Some begining combinations are
    \begin{align}\label{eq:beginning-mortality}
    \begin{split}
        s&=(\mathbf I,1,0)\\
        s*s &= (\mathbf I,0,1)\\
        s*(s*s) &= (\mathbf I+(A-\mathbf I),0,0) = (A,0,0)\\
        (s*s)*s &= (\mathbf I+(B-\mathbf I),0,0) = (B,0,0)\\
        (s*s)*(s*s) &= (\mathbf I, 0,0).
    \end{split}
    \end{align}

    Consider a vector $v$ obtained by combining $n$ instances of $s$. It follows from $v_{d^2+1} = 0$ for $n > 1$ that $v_{d^2+2} = 0$ for $n>2$. The consequence is that for $n > 3$, the first $d^2$ dimensions of $v$, denoted by $\bar v$, are
    \[
        CC' + ij'(A -\mathbf I) + ji'(B-\mathbf I) = CC'.
    \]
    Together with \eqref{eq:beginning-mortality}, the matrix form of $\bar v$ for any $n$ presents a product of matrices from $\{\mathbf I,A,B\}$. It follows that if $v=0$ for some combination, then $\{A,B\}$ is mortal.
    
    On the other hand, if $\{A,B\}$ is mortal with $M_1\dots M_k=0$, the combination 
    \[
        (S_1*(S_2*(S_3*(\dots*(S_{t-1}*S_t)\dots),
    \]
    where $S_t=(s*(s*s))$ if $M_t=A$, and $S_t=((s*s)*s)$ if $M_t=B$, for $t=1,\dots, k$, is zero.

    The equivalence means that we can reduce the problem of checking the mortality of a pair of matrices to the problem of checking if a bilinear system can produce a zero vector. The conclusion follows.
\end{proof}

\section{Checking if the limit exists is undecidable}
Before showing that it is undecidable to check if the growth rate is a limit, we give the following nice transformation.
\begin{proposition}\label{prop:insert-0-at-odd}
	For every bilinear system $(*,s)$ with the function $g(n)$ we can construct $(*',s')$ so that for every $m\ge 1$ we have $g'(2m+1)=0$ and $g'(2m)=g(m)$, where $g'(n)$ is the function for $(*',s')$.
\end{proposition}
\begin{proof}
	Let $\mathbb R^d$ be the space of $(*,s)$. We write $(x,i)$ for a vector $x\in\mathbb R^d$ and a number $i\in \mathbb R$ to present a vector in $\mathbb R^{d+1}$.
     Consider $(*',s')$ with the $(d+1)$-dimensional vector $s'=(\mathbf{0},1)$ where $\mathbf{0}$ denotes the zero vector and $*':\mathbb R^{d+1}\times\mathbb R^{d+1}\to\mathbb R^{d+1}$ presented by
     \begin{equation}\label{eq:transform-double}
         \begin{pmatrix}
             x\\
             i
         \end{pmatrix}
         *'
         \begin{pmatrix}
             y\\
             j
         \end{pmatrix}
         =
         \begin{pmatrix}
             ijs + x*y\\
             0
         \end{pmatrix}
         .
     \end{equation}
     
    Let $v$ be the vector obtained from a combination of $n$ instances of $s'$ (using $*'$). 
    For $n=1$, we have $v=s'=(\mathbf{0}, 1)$. For $n=2$, we have $v=s'*s'=(s,0)$.
     When $n\ge 3$, the summand $ijs$ in \eqref{eq:transform-double} is zero since either $i$ or $j$ is zero, for which we have the recursive form $x*y$ for the first $d$ dimensions. It follows that the first $d$ dimensions $\bar v$ of $v$ are a combination of vectors in $\{\mathbf{0}, s\}$ (using $*$). If $\bar v$ is nonzero, then $\bar v$ is a combination of some $m$ instances of $s$, with $2m=n$. Since $v_{d+1}=0$ for any $n\ge 2$, we have $g'(2m)\le g(m)$. Considering odd $n>1$, we start with $s*(s*s)=(s*s)*s=(\mathbf 0,0)$ for $n=3$. By induction, one can show that $g'(2m+1)=0$ for any $m\ge 1$. On the other hand, for any combination of $m$ instances of $s$ (using $*$) that is associated with $g(m)$, we also have the corresponding combination of $2m$ instances of $s'$ (using $*'$) by replacing each instance of $s$ by $(s'*s')$. The resulting vector of the former combination is the same as $\bar v$ for the resulting vector $v$ of the latter combination. It follows that $g'(2m)=g(m)$.
     \end{proof}

\begin{theorem}\label{thm:undecidable-validity}
	Checking the existence of the limit of $\sqrt[n]{g(n)}$ is undecidable.
\end{theorem}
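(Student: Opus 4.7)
The plan is to reduce the undecidable problem of checking $\lambda\le 1$ (Theorem~\ref{thm:undecidable-estimate}) to the problem of deciding whether the limit $\lim_{n\to\infty}\sqrt[n]{g(n)}$ exists. The key raw material is Proposition~\ref{prop:insert-0-at-odd}, which given any $(*,s)$ produces $(*',s')$ with $g'(2m)=g(m)$ and $g'(2m+1)=0$ for $m\ge 1$. Applied alone, this already yields $\limsup\sqrt[n]{g'(n)}=\sqrt{\lambda}$ and $\liminf\sqrt[n]{g'(n)}=0$, so the limit would fail whenever $\lambda>0$; but this does not yet match the threshold $\lambda=1$ that we need.

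To move the threshold to $\lambda=1$, I would append to $(*',s')$ a single additional coordinate that is forced to take the value $1$ on every combination. Concretely, add an index $d+2$ with starting value $1$ and bilinear rule $(x*''y)_{d+2}=x_{d+2}y_{d+2}$, while leaving the action on the original coordinates untouched and not letting the new coordinate feed into them. By induction on $n$, this new coordinate equals $1$ on every combination of $n$ instances, so $g''(n)=\max(g'(n),1)$. The construction remains within the nonnegative setting.

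It then remains to analyze $\sqrt[n]{g''(n)}$: for odd $n$ it equals $1$, and for even $n=2m$ it equals $\max(g(m),1)^{1/(2m)}$. If $\lambda\le 1$, then for every $\varepsilon>0$ the bound $g(m)\le(1+\varepsilon)^m$ holds for all sufficiently large $m$, forcing $\max(g(m),1)^{1/(2m)}\to 1$, so both subsequences converge to $1$ and the limit exists. If $\lambda>1$, pick a subsequence $(m_k)$ with $g(m_k)^{1/m_k}\to\lambda$; then $g(m_k)\to\infty$ and $g''(2m_k)^{1/(2m_k)}\to\sqrt{\lambda}>1$, whereas the odd-indexed values remain $1$, so the limit fails to exist. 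Hence the limit is valid for $(*'',s'')$ if and only if $\lambda\le 1$ for $(*,s)$, giving the desired reduction.

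No step presents a genuine obstacle; the only delicate point is ensuring that the appended coordinate is both always equal to $1$ and truly inert, which is why its bilinear form must be the pure product $x_{d+2}y_{d+2}$ with no cross terms, and why indices $1,\dots,d+1$ must not see the new index in any of their bilinear rules. Once this decoupling is in place, the reduction is immediate.
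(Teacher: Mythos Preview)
Your proposal is correct and follows essentially the same route as the paper: apply Proposition~\ref{prop:insert-0-at-odd} to obtain $(*',s')$ with $g'(2m)=g(m)$ and $g'(2m+1)=0$, then append one inert coordinate that is always $1$ via the rule $(x*''y)_{d'+1}=x_{d'+1}y_{d'+1}$, so that the limit of $\sqrt[n]{g''(n)}$ exists precisely when $\lambda\le 1$. Your write-up is somewhat more explicit about the two cases $\lambda\le 1$ and $\lambda>1$, but the construction and the reduction are identical to the paper's.
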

\begin{proof}
	We will reduce the problem of checking if $\lambda=\limsup_{n\to\infty} \sqrt[n]{g(n)} \le 1$ for a system $(*,s)$ to the problem of checking the existence of the limit of another system.

	By Proposition~\ref{prop:insert-0-at-odd}, we can construct a system $(*',s')$ so that for every $m\ge 1$ we have $g'(2m+1)=0$ and $g'(2m)=g(m)$. Let the space of $(*',s')$ be $\mathbb R^{d'}$, we construct $*'':\mathbb R^{d'+1}\times\mathbb R^{d'+1}\to\mathbb R^{d'+1}$ and $s''\in\mathbb R^{d'+1}$ so that the system $(*',s')$ is brought into the first $d'$ dimensions of the new system $(*'',s'')$ and 
	\[
		s''_{d'+1}=1,\qquad (x*''y)_{d'+1}=x_{d'+1} y_{d'+1}.
	\]

	The last dimension is obviously always $1$. It follows that $g''(2m+1)=1$ and $g''(2m)=\max\{g(m),1\}$ for $m\ge 1$. 
 It means $\liminf_{n\to\infty} \sqrt[n]{g''(n)}=1$ since $g''(n)\ge 1$ for every $n$ and $\liminf_{n\to\infty} \sqrt[n]{g''(n)} \le \liminf_{m\to\infty} \sqrt[2m+1]{g''(2m+1)} = 1$.
Meanwhile, 
\[
\lambda''=\limsup_{n\to\infty} \sqrt[n]{g''(n)}=\max\left\{\limsup_{n\to\infty} \sqrt[n]{g'(n)}, \limsup_{n\to\infty} \sqrt[n]{g''_{d'+1}(n)}\right\} = \max\{\lambda,1\},
\]
where $g'(n)$ is also the largest possible entry over all the first $d'$ dimensions, and $g''_{d'+1}$ denotes the one for the last dimension, which is always $1$.
In total, the limit of $\sqrt[n]{g''(n)}$ exists if and only if $\lambda\le 1$. 
The reduction is finished, and the conclusion on the undecidability follows.
\end{proof}

\section{Growth rate in the nonnegative setting}
\label{sec:nonnegative-setting}
Before presenting a formula of the growth rate, we present some definitions that can be found in \cite{bui2021growth} and \cite{bui2021growth2}. The definitions here are self-contained, but the readers are advised to check the original source for more intuitions and explanations. In fact, the proof of the formula is a simplified and adapted version of the argument in \cite{bui2021growth2} for the nonnegative setting.

Beside $g(n)$, we also denote by $g_i(n)$ the largest possible $i$-th entry over all vectors obtained from a combination of $n$ instances of $s$.

We make an assumption that for every $i$ there exists some $n$ so that $g_i(n)>0$, otherwise we can safely eliminate such a degenerate dimension $i$. How to check for some $i$ if $g_i(n)=0$ for every $n$ is left as an exercise for the readers. Note that without the assumption, some later results may not hold in their current form.

A \emph{composition tree} is a rooted binary tree where each vertex is assigned a vector in the following way. We assign the same vector $s$ to all leaves, and assign to each non-leaf vertex the value $x*y$ where $x,y$ are respectively the vectors of the left and right children. The vector obtained at the root is called the \emph{vector associated with} the composition tree. We often call a composition tree a \emph{tree} for short. It can be seen that there is a one-to-one correspondence between a tree of $n$ leaves and a combination of $n$ instances of $s$.

If every leaf is assigned the same vector $s$ but a specially \emph{marked leaf} is assigned a vector variable $u$, then the vector $v$ at the root depends linearly on $u$ by a matrix $M=M(P)$, that is $v=Mu$. If the tree is $T$ and the leaf is $\ell$, we say such a setting is a \emph{linear pattern} $P=(T,\ell)$. We call $M$ the \emph{matrix associated with} $P$. 

A \emph{composition} $P_1\oplus P_2$ of two linear patterns $P_1=(T_1,\ell_1),P_2=(T_2,\ell_2)$ is the pattern $(T,\ell)$ so that $T$ is obtained from $T_1$ by replacing $\ell_1$ by $T_2$, and setting $\ell=\ell_2$. If $M_1,M_2$ are the matrices associated with $P_1,P_2$, then $M_1M_2$ is the matrix associated with $P_1\oplus P_2$.

For $m\ge 1$ we denote $P^m=P\oplus\dots\oplus P$ where there are $m$ instances of $P$. If $M$ is associated with $P$ then obviously $M^m$ is associated with $P^m$. 

The \emph{number of leaves} $|P|$ of a pattern $P=(T,\ell)$ is defined to be the number of leaves excluding the marked leaf (i.e. one less than the number of leaves in $T$). One can see that $|P\oplus Q|=|P|+|Q|$.

For convenience, we also denote by $P\oplus T'$ the tree obtained from the tree of the pattern $P$ by replacing the marked leaf by the tree $T'$. Let $u$ be the vector associated with $T'$, the vector $v$ associated with $P\oplus T'$ is $Mu$ for $M=M(P)$. Let $T'$ be a tree with a bounded number of leaves so that $u_j>0$, then 
\begin{equation}\label{eq:matrix_entry_le_g}
    M_{i,j}\le\const M_{i,j}u_j\le\const v_i\le\const g_i(|P|+O(1)).
\end{equation}
Note that $O(1)$ is the number of leaves of $T'$, which could be as large as an exponential function of the dimension, but still bounded anyway.

Let $*$ be represented by the coefficients $c_{i,j}^{(k)}$ so that for any vectors $x,y$ and an index $k$,
\[
	(x*y)_k = \sum_{i,j} c_{i,j}^{(k)} x_i y_j.
\]
The \emph{dependency graph} is the directed graph where the vertices are the dimensions and there is an edge from $k$ to $i$ if and only if $c_{i,j}^{(k)}\ne 0$ or $c_{j,i}^{(k)}\ne 0$ for some $j$ (loops are allowed). The dependency graph can be partitioned into strongly connected components, which we call \emph{components} for short. These components define a partial order so that for two different components $C', C$, we say $C'<C$ if there is a path from $i$ to $j$ for $i\in C$ and $j\in C'$. 

If there is a path from $i$ to $j$, then there is a linear pattern $P_{i\to j}$ of a bounded number of leaves so that $M(P_{i\to j})_{i,j}>0$. It can be seen from the fact that if there is an edge $ki$ then $M(P)_{k,i}>0$ for $P=(T,\ell)$ where $\ell$ is the left (resp. right) child of the root if $c_{i,j}^{(k)}\ne 0$ (resp. $c_{j,i}^{(k)}\ne 0$), and the right (resp. left) subtree has a bounded number of leaves whose associated vector has a positive $j$-th entry. If the distance from $i$ to $j$ is greater than $1$, then the desired linear pattern can be obtained by compositions.

\subsection{The formula}
Now we have enough material to prove the following formula of the growth rate.
\begin{theorem}\label{thm:limsup=sup}
	The growth rate can be expressed as a supremum by
	\[
		\lambda=\limsup_{n\to\infty} \sqrt[n]{g(n)} = \sup_{\text{linear pattern $P$}} \max_i \sqrt[|P|]{M(P)_{i,i}}.
	\]
\end{theorem}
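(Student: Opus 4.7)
The plan is to prove the two inequalities separately. The lower bound $\lambda\ge\sup_{P,i}\sqrt[|P|]{M(P)_{i,i}}$ is the straightforward direction via iteration of patterns. Given any linear pattern $P$ and dimension $i$ with $M(P)_{i,i}>0$, the iterate $P^m$ satisfies $|P^m|=m|P|$ and $M(P^m)=M(P)^m$; because all entries of $M(P)$ are nonnegative, $(M(P)^m)_{i,i}\ge M(P)_{i,i}^m$ by taking the constant-$i$ summand in the matrix power. By the standing nondegeneracy assumption there is a tree $T_i$ of bounded size whose associated vector $u$ has $u_i>0$. Plugging $T_i$ into the marked leaf of $P^m$ yields a combination of $n=m|P|+|T_i|$ copies of $s$ whose root vector $M(P)^m u$ has $i$-th coordinate at least $M(P)_{i,i}^m u_i$, so $g(n)\ge M(P)_{i,i}^m u_i$. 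Taking $n$-th roots as $m\to\infty$ gives $\lambda\ge\sqrt[|P|]{M(P)_{i,i}}$, and the supremum over $(P,i)$ finishes this direction.

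For the reverse inequality, let $\mu$ denote the right-hand side and assume without loss of generality $\mu<\infty$. The goal is to establish $g(n)\le\mathrm{poly}(n)\,\mu^n$, so that $\lambda\le\mu$ after taking $n$-th roots. For any tree $T$ of $n$ leaves and any leaf $\ell$, the pattern $P=(T,\ell)$ has $|P|=n-1$ and root vector equal to $M(P)\,s$, so
\[
  g(n)\;\le\;d\,\|s\|_\infty\max_{k,j}M(P)_{k,j}.
\]
Hence the whole matter reduces to a uniform entry bound $\max_{k,j}M(P)_{k,j}\le\mathrm{poly}(|P|)\,\mu^{|P|}$ over all linear patterns $P$.

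The main obstacle is that the hypothesis only controls \emph{diagonal} entries $M(Q)_{i,i}$, whereas what is needed is a bound on all entries $M(P)_{k,j}$. The bridge is the strongly connected component decomposition of the dependency graph: the components are partially ordered, and once a realizing walk for $M(P)_{k,j}>0$ leaves a component it cannot return. Using the bounded-size bridging patterns $P_{i\to j}$ from the preliminaries, the plan is to decompose $P$ into alternating intra-component blocks $Q_t$ and inter-component bridges $B_t$; since there are at most $d$ components, the bridges together contribute $O(1)$ leaves. Inside a single component the dependency graph is strongly connected, so any realization of a positive entry of $M(Q_t)$ pigeonholes into $O(|Q_t|)$ sub-patterns each admitting a positive diagonal entry, hence each bounded by $\mu^{\mathrm{size}}$ by the definition of $\mu$. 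Multiplying across the $O(1)$ blocks and summing over the polynomially many decomposition choices produces the required factor $\mathrm{poly}(|P|)\,\mu^{|P|}$, completing the upper bound and hence the theorem.
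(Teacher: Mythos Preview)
Your lower-bound argument is correct and essentially the same as the paper's.

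The upper bound, however, has a real gap in the treatment of the inter-component ``bridges''. If you decompose $P$ along the root-to-marked-leaf path as $P = P_1\oplus\cdots\oplus P_L$ with each $P_t$ of depth one, then a nonzero contribution to $M(P)_{k,j}$ corresponds to a walk $k=i_0,i_1,\dots,i_L=j$ in the dependency graph, and the at most $d-1$ steps where the walk changes component are specific depth-one pieces $P_t$. But $|P_t|$ equals the number of leaves of the sibling subtree hanging at that level, which is \emph{unbounded}; so the assertion that ``the bridges together contribute $O(1)$ leaves'' is false. The bounded-size patterns $P_{i\to j}$ from the preliminaries are auxiliary devices for manufacturing diagonal entries, not sub-patterns of $P$, and you cannot substitute them into the decomposition without changing the quantity being bounded. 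Each transition factor is therefore only controlled by $M(P_t)_{a,b}\le\const\cdot g(|P_t|)$, and you are back to bounding $g$ on inputs of uncontrolled size --- a circularity your sketch does not close. (Incidentally, for the intra-component blocks no pigeonhole is needed: composing with a bounded $P_{b\to a}$ gives $M(Q)_{a,b}\le\const\cdot\mu^{|Q|}$ in one stroke, and that single observation is all one needs inside a component.)

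The paper sidesteps the bridge problem by not decomposing along the marked-leaf path at all. After establishing the same-component bound $M(P)_{i,j}\le\const\cdot\mu^{|P|}$ via the composition trick just mentioned, it proves $g_i(n)\le n^{O((\log n)^r)}\mu^n$ by induction on the partial order of components: take the optimal tree $T$ for $g_i(n)$, split off a subtree $T_0$ with $n/3\le |T_0|\le 2n/3$, write $T=P'\oplus T_0$, and treat the dominant index $j$ in $g_i(n)\le\const\cdot M(P')_{i,j}\,g_j(|T_0|)$ according to whether $j$ lies in $i$'s component (apply the same-component bound to $M(P')_{i,j}$) or in a strictly lower one (apply the component-induction hypothesis to $g_j$, and bound $M(P')_{i,j}\le\const\cdot g_i(n-|T_0|+O(1))$). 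The balanced split guarantees that only $O(\log n)$ recursive steps are needed before the argument becomes small, which keeps the prefactor subexponential; this geometric recursion is precisely what replaces the missing control on bridge sizes in your plan.
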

\begin{proof}
	Let $\theta$ denote the supremum in the theorem. It can be seen that $\lambda\ge\theta$. Indeed, for any $P$ and $i$, consider the sequence $n=q|P|+r$ for $q=1,2,\dots$, where $r$ satisfies $g_i(r)>0$ by a tree $T_0$. For such $n$, consider the tree $P^q\oplus T_0$, the associated vector has the $i$-th entry at least $\const (M(P)_{i,i})^q$. As $r$ is bounded, the lower bound of $\lambda$ follows.

	It remains to prove the other direction $\lambda\le\theta$ by the fact that for every $i$ there exists some $r$ so that\footnote{One can follow the induction in the proof to see that $r$ can be bounded by the dimension. In fact, in \cite{bui2021growth2} the bound is even shown to be $\const n^r\theta^n$, but we keep the approach simpler for the purpose of proving the formula only.}
	\begin{equation}\label{eq:upper-bound}
		g_i(n)\le \const n^{O((\log n)^r)} \theta^n.
	\end{equation}

	At first, we make an observation: If $i,j$ are in the same connected component, then for every linear pattern $P$,
	\begin{equation}\label{eq:base}
		M(P)_{i,j}\le\const\theta^{|P|}.
	\end{equation}

	Indeed, let $P_{j\to i}$ be the pattern of a bounded number of leaves so that $M(P_{j\to i})_{j,i}>0$, we have $M(P\oplus P_{j\to i})_{i,i} \ge M(P)_{i,j} M(P_{j\to i})_{j,i} \ge \const M(P)_{i,j}$. Meanwhile, $M(P\oplus P_{j\to i})_{i,i}\le \theta^{|P\oplus P_{j\to i}|} \le\const \theta^{|P|}$. The observation is clarified.

	When the component is not connected (containing a single vertex without loops), the observation is trivial.

	We prove \eqref{eq:upper-bound} by induction on the components. The observation in \eqref{eq:base} is the base case. Indeed, for any $i$ in a minimal component let $P$ be any pattern with the tree associated with $g_i(n)$. We have $g_i(n)=\sum_j M(P)_{i,j} s_j \le\const M(P)_{i,j}$ for some $j$ (note that $j$ is in the same component). Now, suppose \eqref{eq:upper-bound} holds for any vertex in a component lower than the component of $i$ with the degree $r'$, we prove that it also holds for $i$ with some degree $r$.

	Let $T$ be the tree associated with $g_i(n)$. Pick a subtree $T_0$ of $m$ leaves so that $n/3\le m\le 2n/3$. Let the pattern $P'$ be so that we have the decomposition $T=P'\oplus T_0$. Let $M'$ be the matrix associated with $P'$ and $u$ be the vector associated with $T_0$, we have
	\[
		g_i(n)=\sum_j M'_{i,j} u_j \le \const M'_{i,j} u_j \le \const M'_{i,j} g_j(m)
	\]
	for some $j$.

	If $j$ is in the same component as $i$, then we have $M'_{i,j}\le\const\theta^{n-m}$ by \eqref{eq:base}. Therefore,
	\begin{equation}\label{eq:first-case}
    		g_i(n)\le\const \theta^{n-m} g_j(m).
	\end{equation}

	If $j$ is not in the component of $i$, then $g_j(m)\le \const m^{O((\log m)^{r'})} \theta^m$ by induction hypothesis. Since $M'_{i,j}\le \const g_i(|P'|+O(1))$ by \eqref{eq:matrix_entry_le_g}, we have
	\begin{equation}\label{eq:second-case}
    		g_i(n)\le \const g_i(n-m+O(1)) m^{O((\log m)^{r'})} \theta^m. 
	\end{equation}

	In either case we have reduced $n$ to at most a fraction of $n$ and $g_i$ to $g_k$ with $k$ still in the same component of $i$.
    Repeating the process recursively to either $g_j(m)$ or $g_i(n-m+O(1))$ an $O(\log n)$ number of times until the argument is small enough, we obtain
	\[
		g_i(n)\le \const K^{O(\log n)} \left(n^{O((\log n)^{r'})}\right)^{O(\log n)} \theta^{n+O(\log n)} \le \const n^{O((\log n)^r)} \theta^n
	\]
	where $r=r'+1$ and $K$ is the larger one of the constants in \eqref{eq:first-case} and \eqref{eq:second-case}. (Note that $a^{\log b}=b^{\log a}$.)

	The proof finishes by induction. 
\end{proof}

\subsection{A condition for the limit to exist}
We provide a condition in the nonnegative setting so that $\lambda$ is a limit.
\begin{theorem}
	Suppose there exists some $n_0$ so that for every $n\ge n_0$ and for every $i$ we have $g_i(n)>0$, then $\lambda$ is a limit.
\end{theorem}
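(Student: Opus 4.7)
The plan is to establish the lower bound $\liminf_{n\to\infty}\sqrt[n]{g(n)}\ge\lambda$; combined with $\limsup_{n\to\infty}\sqrt[n]{g(n)}=\lambda$ (Theorem \ref{thm:limsup=sup}) this forces the limit to exist and equal $\lambda$. The case $\lambda=0$ is immediate from $g(n)\ge 0$, so I would assume $\lambda>0$ and fix an arbitrary $\varepsilon\in(0,\lambda)$.

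By Theorem \ref{thm:limsup=sup} I would pick a linear pattern $P$ and an index $i$ with $\sqrt[|P|]{M(P)_{i,i}}\ge\lambda-\varepsilon$. Write $k=|P|$ and $M=M(P)$. Because $*$ has nonnegative coefficients, $M$ is entrywise nonnegative, and the diagonal path in $(M^q)_{i,i}=\sum_{j_1,\dots,j_{q-1}}M_{i,j_1}M_{j_1,j_2}\cdots M_{j_{q-1},i}$ (taking $j_1=\cdots=j_{q-1}=i$) already yields $(M^q)_{i,i}\ge M_{i,i}^q\ge(\lambda-\varepsilon)^{kq}$. Now for any $n\ge n_0+k$ decompose $n=kq+r$ with $n_0\le r<n_0+k$; by hypothesis $g_i(r)>0$, so some tree $T_0$ on $r$ leaves has associated vector $u$ with $u_i=g_i(r)>0$. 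Then $P^q\oplus T_0$ has exactly $n$ leaves and associated vector $M^q u$, whose $i$-th entry is at least $(M^q)_{i,i}\,u_i\ge(\lambda-\varepsilon)^{kq}\,g_i(r)$.

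Setting $c_0=\min_{n_0\le r<n_0+k}g_i(r)>0$, a minimum over a finite set of strictly positive values, I would conclude $g(n)\ge g_i(n)\ge c_0(\lambda-\varepsilon)^{n-r}$ for all $n\ge n_0+k$. Taking $n$-th roots and noting that $r\in[n_0,n_0+k)$ is bounded as $n\to\infty$ yields $\liminf_{n\to\infty}\sqrt[n]{g(n)}\ge\lambda-\varepsilon$, and letting $\varepsilon\to 0$ finishes the argument. I do not anticipate a serious obstacle: the nonnegativity of $M$ hands over the iterate bound $(M^q)_{i,i}\ge M_{i,i}^q$ for free, and the hypothesis is tailored precisely so that the ``remainder'' of $r$ leaves can always feed the correct coordinate $i$ into the iterated pattern. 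The only mildly delicate point is choosing $n=kq+r$ with $r$ in a window of length $k$ starting at $n_0$, ensuring simultaneously $r\ge n_0$ (so $g_i(r)>0$ applies) and $r$ bounded (so the factor $c_0^{1/n}(\lambda-\varepsilon)^{-r/n}$ tends to $1$).
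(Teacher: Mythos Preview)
Your proof is correct and follows essentially the same approach as the paper's: both write $n=q|P|+r$ with $n_0\le r<n_0+|P|$, use the tree $P^q\oplus T_0$ where $T_0$ realizes $g_i(r)>0$, and bound the $i$-th entry below by $(M(P)_{i,i})^q$ times a bounded positive constant. The only cosmetic difference is that you introduce an $\varepsilon$ and pick a near-optimal pattern, whereas the paper proves $\liminf_n\sqrt[n]{g(n)}\ge\sqrt[|P|]{M(P)_{i,i}}$ for every $P$ and $i$ and then takes the supremum.
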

\begin{proof}
	Let $\theta$ denote the supremum in Theorem~\ref{thm:limsup=sup}, it suffices to prove that 
	\[
		\liminf_{n\to\infty} \sqrt[n]{g(n)}\ge \theta,
	\]
	which can be reduced to showing that for any pattern $P$ and any index $i$, we have
	\[
		\liminf_{n\to\infty} \sqrt[n]{g(n)}\ge \sqrt[|P|]{M(P)_{i,i}}.
	\]

	Indeed, for every $n$ large enough, let $n=q|P|+r$ so that $n_0\le r< n_0+|P|$. 
    Let $T_0$ be the tree associated with $g_i(r)$. The associated vector with $P^q\oplus T_0$ has the $i$-th entry at least a constant times $(M(P)_{i,i})^q$. Since $r$ is bounded, the conclusion follows.
\end{proof}

We have provided another proof that the limit always exists in the positive setting. This proof is simpler than both other versions in \cite{bui2021growth} and \cite{bui2021growth2}.
\begin{corollary}\label{cor:lim-exists}
    The limit $\lambda$ exists in the positive setting, that is we can replace $\limsup$ by $\lim$ in the definition.
\end{corollary}

\section{Applications of the formula}
\label{sec:applications}
\subsection{A formula for the spectral radius}
The formula in Theorem~\ref{thm:limsup=sup} turns out to give more elementary results such as the following formula for the spectral radius of a nonnegative matrix $A$, which is also discussed in detail in \cite{bui2024bound}.
\begin{theorem}\label{thm:spectral-radius}
	For every nonnegative matrix $A$, the spectral radius $\rho(A)$ can be written as
	\[
		\rho(A)=\sup_n\max_i\sqrt[n]{(A^n)_{i,i}}.
	\]
\end{theorem}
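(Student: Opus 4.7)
The plan is to deduce the formula by applying Theorem~\ref{thm:limsup=sup} to a bilinear system tailored to $A$. Fix any strictly positive vector $w \in \mathbb{R}^d$ and work in $\mathbb{R}^{d+1}$ with $s = (w, 1)$ and
\[
    (x*y)_k = \sum_{l=1}^{d} A_{k,l}\, x_l\, y_{d+1}\quad (k \le d), \qquad (x*y)_{d+1} = x_{d+1}\, y_{d+1}.
\]
A short induction on $n$ shows that every composition tree with $n$ leaves evaluates to $(A^{n-1} w,\, 1)$, independent of the tree's shape. Consequently $g(n) = \max(1, \|A^{n-1} w\|_\infty)$, and the growth rate is $\lambda = \max(1, \rho(A))$ (after first discarding any degenerate coordinate $i \le d$ with $(A^n w)_i \equiv 0$, which does not affect $\rho(A)$ and restores the nondegeneracy hypothesis of Theorem~\ref{thm:limsup=sup}).

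Next I would analyse $M(P)$ for a general linear pattern $P$. Walking from the marked leaf $\ell$ up to the root, the $(d+1)$-st row of the cumulative matrix is always $e_{d+1}^\top$, because the last coordinate at every step equals the product of the last coordinates of the two children and every non-marked subtree has last coordinate $1$. Each step where the current ancestor of $\ell$ is the \emph{left} child of its parent left-multiplies the cumulative matrix by $\bigl(\begin{smallmatrix} A & 0 \\ 0 & 1 \end{smallmatrix}\bigr)$; each step where it is the \emph{right} child, with sibling $T'$ of $m$ leaves evaluating to $(A^{m-1} w, 1)$, replaces the cumulative matrix by $\bigl(\begin{smallmatrix} 0 & A^m w \\ 0 & 1 \end{smallmatrix}\bigr)$, zeroing the first $d$ columns of the top-left block. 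Hence the patterns in which $\ell$ is the left child at every step of its path and each sibling is a single leaf $s$ are exactly those with $|P| = n$ (the depth of $\ell$) and $M(P) = \bigl(\begin{smallmatrix} A^n & 0 \\ 0 & 1 \end{smallmatrix}\bigr)$, giving $M(P)_{i,i} = (A^n)_{i,i}$ for $i \le d$; every other pattern satisfies $M(P)_{i,i} = 0$ for all $i \le d$. In every case $M(P)_{d+1,d+1} = 1$.

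Plugging these into Theorem~\ref{thm:limsup=sup} yields
\[
    \max(1, \rho(A)) \;=\; \sup_P \max_i M(P)_{i,i}^{1/|P|} \;=\; \max\Bigl(1,\ \sup_n \max_{i \le d} (A^n)_{i,i}^{1/n}\Bigr),
\]
which immediately gives the desired formula whenever $\rho(A) > 1$. The remaining cases follow by homogeneity: for $\rho(A) > 0$ apply the argument to $cA$ with $c > 1/\rho(A)$, noting that both sides of the target identity are homogeneous of degree one in $A$; for $\rho(A) = 0$ the matrix $A$ is nonnegative nilpotent, so each $(A^n)_{i,i}$ vanishes (the diagonal entries are nonnegative and sum to $\mathrm{tr}(A^n) = 0$) and the identity $0 = 0$ is trivial.

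The main technical hurdle is the ``left-child-only'' characterisation in the second paragraph; the key observation is that as soon as $\ell$ sits in the $y$-slot of a $*$, the formula $(x*y)_k = \sum_l A_{k,l} x_l\, y_{d+1}$ for $k \le d$ couples the cumulative matrix to $u$ only through $u_{d+1}$, zeroing the first $d$ columns forever after. Once this is established, combining with Theorem~\ref{thm:limsup=sup} and the homogeneity argument is routine.
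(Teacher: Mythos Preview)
Your induction in the first paragraph is false: not every tree on $n$ leaves evaluates to $(A^{n-1}w,1)$. With $n=3$, the tree $s*(s*s)$ yields $(Aw,1)$, not $(A^2w,1)$, because $(x*y)_{[1,d]}=y_{d+1}\,A\,x_{[1,d]}$ ignores $y_{[1,d]}$ entirely. The correct statement is that a tree on $n$ leaves evaluates to $(A^kw,1)$ where $k$ is the depth of the \emph{leftmost} leaf, so $1\le k\le n-1$ for $n\ge 2$. The left comb realises $k=n-1$ and nothing does better, so $\lambda=\max(1,\rho(A))$ still holds, but the argument as written does not prove it. (Incidentally, since $s=(w,1)>0$ you already have $g_i(1)>0$ for every $i$, so no coordinate needs to be discarded.)

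There is a parallel slip in the pattern analysis: you claim that every pattern other than ``$\ell$ always a left child with single-leaf siblings'' has $M(P)_{i,i}=0$ for $i\le d$. This overlooks the patterns where $\ell$ is always a left child but some sibling has more than one leaf; there $M(P)=\bigl(\begin{smallmatrix}A^k&0\\0&1\end{smallmatrix}\bigr)$ with $k$ equal to the depth of $\ell$, which is strictly less than $|P|$, so $M(P)_{i,i}=(A^k)_{i,i}$ can be positive. Fortunately these extra contributions $(A^k)_{i,i}^{1/|P|}$ with $|P|>k$ are bounded above by $\max\bigl(1,(A^k)_{i,i}^{1/k}\bigr)$, hence do not change the supremum, and your displayed identity survives.

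With these two corrections your route goes through. For comparison, the paper sidesteps all of this by working in $\mathbb R^{d^2}$ with $*$ equal to matrix multiplication itself and $s=\tilde\Gamma(A)$: then every tree on $n$ leaves is genuinely shape-independent and gives $\tilde\Gamma(A^n)$, so $\lambda=\rho(A)$ on the nose, with no auxiliary coordinate, no $\max(1,\cdot)$, and no rescaling argument at the end. Your construction buys a much smaller ambient dimension ($d+1$ versus $d^2$) at the cost of these complications.
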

\begin{proof}
    The direction that $\rho(A)\ge\sup_n\max_i\sqrt[n]{(A^n)_{i,i}}$ is trivial. We prove the other direction.
    
	Suppose $A$ is a $d\times d$ matrix. Consider an embedding of any $d\times d$ matrix $B$ to a vector $v$ in $\mathbb R^{d^2}$ by the function $\Gamma$ so that
	\[
		B=\Gamma(v),\qquad v=\Gamma^{-1}(B).
	\]
	Let the system $(*,s)$ in the space $\mathbb R^{d^2}$ be so that $s=\Gamma^{-1}(A)$ and
	\[
		u*v=\Gamma^{-1}(\Gamma(u) \Gamma(v)).
	\]
    One can see that every combination of $n$ instances of $s$ gives $\Gamma^{-1}(A^n)$. (The operator $*$ is associative.) Therefore,
	\[
		\lambda=\rho(A).
	\]
	On the other hand, if $P$ is a linear pattern with $|P|=m$, then the relation between the vector at the root $v$ and the vector at the marked leaf $u$ is 
	\[
		\Gamma(v)= A^t \Gamma(u) A^{m-t}
	\]
    for some $0\le t\le m$.
    In particular, for every $i,j$, one can write
    \[
        \Gamma(v)_{i,j} = \sum_{k,\ell} (A^t)_{i,k} \Gamma(u)_{k,\ell} (A^{m-t})_{\ell,j} = \sum_{k,\ell} \Gamma(u)_{k,\ell} (A^t)_{i,k} (A^{m-t})_{\ell,j}.
    \]
 Let $M$ be the $d^2\times d^2$ matrix so that $v=Mu$. The diagonal $M_{(i,j),(i,j)}$ is
 \[
    (A^t)_{i,i} (A^{m-t})_{j,j}.
 \]
	It follows from Theorem~\ref{thm:limsup=sup} that
 \begin{align*}
  \rho(A)=\lambda &=\sup_m \max_{\substack{\text{linear pattern $P$}\\|P|=m}} \max_{i,j} \sqrt[m]{M(P)_{(i,j),(i,j)}} \\
  &\le \sup_m \max_{0\le t\le m} \max_{i,j} \sqrt[m]{(A^t)_{i,i} (A^{m-t})_{j,j}} \\
      &\le \sup_m \max_{0\le t\le m} \max_{i,j} \max\left\{\sqrt[t]{(A^t)_{i,i}}, \sqrt[m-t]{(A^{m-t})_{j,j}}\right\} \\
      &\le \sup_n \max_i \sqrt[n]{(A^n)_{i,i}},
 \end{align*}
 which concludes the proof.
\end{proof}

\begin{remark}
	One can also obtain a similar formula for the joint spectral radius as in \cite{bui2024bound} using this method with the construction in Section~\ref{sec:reduction}. However, it would be more complicated to argue.
\end{remark}
\subsection{Finiteness property and linear patterns}
The growth rate can be written a bit differently as
\[
	\lambda= \sup_{\text{linear pattern $P$}} \sqrt[|P|]{\sup_n \max_i \sqrt[n]{[M(P)^n]_{i,i}}},
\]
since the linear pattern $P^n$ has the associated matrix $M(P)^n$ and satisfies $|P^n|=n|P|$.

By the formula of the spectral radius in Theorem~\ref{thm:spectral-radius}, we have
\[
	\lambda=\sup_{\text{linear pattern $P$}} \sqrt[|P|]{\rho(M(P))}.
\]

We call $\bar\lambda_P=\sqrt[|P|]{\rho(M(P))}$ the rate of the pattern $P$.
The formula for the new notation is
\begin{equation}\label{eq:in-rate-pattern}
	\lambda=\sup_{\text{linear pattern $P$}} \bar\lambda_P.
\end{equation}

The rate of a linear pattern is the original motivation for the proof of the limit $\lambda$ in the positive setting in \cite{bui2021growth}. Although it is not technically more important than Theorem~\ref{thm:limsup=sup}, its meaning is worth mentioning: Consider the sequence of the trees of $P^1, P^2, \dots$, the vectors $v^{(1)},v^{(2)},\dots$ associated with these trees are $Ms, M^2s, \dots$ for $M=M(P)$. As $s>0$, the growth $\lambda_P=\lim_{n\to\infty} \sqrt[n]{\|v^{(n)}\|}$ of the norms $\|v^{(n)}\|$ is the spectral radius of $M$. However, a lower bound on the growth rate should be $\rho(M)$ after being normalized, by taking the $|P|$-th root, as the number of leaves in $P_i$ grows by $|P|$ in each step, that is $\lambda\ge\bar\lambda_P=\sqrt[|P|]{\lambda_P}$. The proof in \cite{bui2021growth} manages to show that this is also the upper bound.

Representing the growth rate in terms of the rates of linear patterns gives some new insight. While the supremum is rarely attained in the form of Theorem~\ref{thm:limsup=sup} (as rarely as in the case of Theorem~\ref{thm:spectral-radius}), it is quite common that some linear pattern attains the growth rate in the form of \eqref{eq:in-rate-pattern}, that is $\lambda=\bar\lambda_P$ for some $P$. For example, the system in \cite[Theorem~$3$]{bui2021growth}, where $s=(1,1)$ and $x*y=(x_1y_2+x_2y_1,x_1y_2)$, takes the golden ratio as the growth rate, and the growth rate is attained by a linear pattern where the tree has two leaves with the marked leaf on the left. The readers can also check \cite{rote2019maximum} and \cite{rosenfeld2021growth} for more complicated examples.

One can pose the following natural question.
\begin{question}
	When is the growth rate $\lambda$ actually the rate of a linear pattern? 
\end{question}

We relate this question to the finiteness property of a set of matrices.
Given a pair of matrices $A,B$ and the associated bilinear system that is constructed as in Section~\ref{sec:reduction}, we have
\[
	\lambda=\sqrt[3]{\rho(\{A,B\})}.
\]

Suppose the pair $A,B$ has the finiteness property, that is there exists a sequence $M_1,\dots,M_m$ where each matrix is in $\{A,B\}$ so that $\sqrt[m]{\rho(M_1\dots M_m)}=\rho(\{A,B\})$. We can then build a pattern $P=(T,\ell)$ so that $\bar\lambda_P=\sqrt[3]{\rho(\{A,B\})}$. Indeed, if $T'$ is the tree of $3m$ leaves that is associated to $M_1\dots M_m$ (as in Theorem~\ref{thm:undecidable-estimate}), we can let $T$ be the tree of $3m+1$ leaves where one branch is $T'$ and the other branch is the marked leaf $\ell$. The readers can check that $\bar\lambda_P=\sqrt[3]{\rho(\{A,B\}})$.

On the other hand, suppose the pair $A,B$ does not have the finiteness property, e.g. the class of pairs in \cite{blondel2003elementary}, or an explicit instance in \cite{hare2011explicit}. In this case, there is no linear pattern where $\bar\lambda_P=\sqrt[3]{\rho(\{A,B\})}$, since otherwise, by considering the sequence of $P^t$ for $t=1,2,\dots$, we would have a periodic sequence of products of matrices whose norms follow the rate $\rho(\{A,B\})$ (with respect to the number of matrices).

In fact, the readers can find in \cite[Theorem~$2$]{bui2021growth} a simple example in the positive setting where no linear pattern has the same rate as the growth rate $\lambda$. The example is not related to the joint spectral radius and involves only binary entries and coefficients with $s=(1,1)$ and $x*y=(x_1y_1+x_2y_2,x_2y_2)$. 
On the other hand, the algebraic nature of the entries in the example of \cite{hare2011explicit} is quite complicated. It seems that $\prob{JSR}\le \prob{GRBS}$ suggests that some phenomenon of $\prob{GRBS}$ may be easier to construct than a similar one of $\prob{JSR}$. Nevertheless, the finiteness conjecture is still open for the case of rational (and equivalently binary) matrices, see \cite{jungers2008finiteness}. Note that if we have a reduction from $\prob{GRBS}$ to $\prob{JSR}$ that is as natural as the one in Section~\ref{sec:reduction} and keeps the resulting vectors in some form in the resulting matrices, then we can obtain a set of binary matrices without finiteness property. 

\subsection{Computability of the growth rate}
We prove the computability of the growth rate.

\begin{theorem}\label{thm:computable}
	The growth rate in the nonnegative setting is computable.
\end{theorem}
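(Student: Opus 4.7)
The plan is to combine the already-established upper semi-computability of $\lambda$ from \cite{rosenfeld2021growth} with a lower semi-computability argument read off from Theorem~\ref{thm:limsup=sup}. Once we have a computable sequence converging to $\lambda$ from above and another from below, any desired precision $\epsilon>0$ is achieved by running both procedures in parallel until the gap drops below $\epsilon$, which must happen in finitely many steps since both sequences converge to the same value $\lambda$.

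For the lower side I would enumerate linear patterns directly. A linear pattern is a finite binary tree with one distinguished leaf, hence a finite combinatorial object, so the family of patterns is effectively enumerable (say, in order of $|P|$). For each pattern $P_k$ in the enumeration, the matrix $M(P_k)$ is obtained by propagating the coefficients of $*$ along the tree, a finite rational computation; I then compute $\max_i \sqrt[|P_k|]{M(P_k)_{i,i}}$ to any desired precision, which is a standard operation on rationals. By Theorem~\ref{thm:limsup=sup} the running maximum of these quantities, taken over all patterns processed so far, is a non-decreasing computable sequence whose supremum is exactly $\lambda$, yielding lower semi-computability.

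The bulk of the technical effort is already absorbed into Theorem~\ref{thm:limsup=sup}; once that formula is in hand, the passage to computability is essentially bookkeeping. The extended Fekete lemma is not strictly required along this route, but it enters naturally in an alternative version of the argument: instead of reading lower bounds directly off the diagonal of $M(P)$, one can approximate $\bar\lambda_P = \sqrt[|P|]{\rho(M(P))}$ through the spectral radius formula of Theorem~\ref{thm:spectral-radius}, and Fekete's extension guarantees that the positive subsequence of $\sqrt[n]{(M(P)^n)_{i,i}}$ still converges to $\rho(M(P))$ even though some diagonal entries may vanish. The only conceptual point to keep clear, and the reason computability does not collapse to mere semi-decidability, is that neither the upper nor the lower sequence comes with an explicit rate of convergence toward $\lambda$; it is precisely the two-sided parallel chase that converts the pair into arbitrary-precision approximation.
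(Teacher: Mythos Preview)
Your proposal is correct and follows the same overall strategy as the paper: cite upper semi-computability from \cite{rosenfeld2021growth} and derive lower semi-computability from the formula of Theorem~\ref{thm:limsup=sup}, then interleave the two approximations. The one technical difference is how the lower sequence is organized: you enumerate patterns and take a running maximum of $\max_i \sqrt[|P|]{M(P)_{i,i}}$, which is automatically non-decreasing with supremum $\lambda$ and therefore convergent without any further work; the paper instead groups patterns by size, sets $a_n^{(i)}=\max_{|P|=n} M(P)_{i,i}$, observes supermultiplicativity $a_{m+n}^{(i)}\ge a_m^{(i)}a_n^{(i)}$ via $P\oplus Q$, and invokes the extended Fekete lemma to conclude that the positive terms of $\sqrt[n]{\max_i a_n^{(i)}}$ themselves converge to $\lambda$. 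Your running-maximum route is shorter and makes the extended Fekete lemma genuinely optional, exactly as you noted; the paper's route buys the slightly stronger statement that the size-$n$ lower bounds converge on their own, not merely their running maximum.
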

\begin{proof}
	It was known that the growth rate $\lambda$ in the nonnegative setting is upper semi-computable \cite{rosenfeld2021growth}, in the sense that there exists a sequence of upper bounds converging to $\lambda$. It remains to show that it is lower semi-computable, by showing a sequence of lower bounds converging to $\lambda$.
	As
	\[
		\lambda= \sup_{\text{linear pattern $P$}} \max_i \sqrt[|P|]{M(P)_{i,i}},
	\]
	we have
	\[
		\lambda=\sup_n \max_{\substack{\text{linear pattern $P$}\\ |P|=n}} \max_i \sqrt[n]{M(P)_{i,i}}.
	\]

    The sequence
    \[
        a_n=\max_{\substack{\text{linear pattern $P$}\\ |P|\le n}} \max_i \sqrt[|P|]{M(P)_{i,i}}
    \]
    for $n=1,2,\dots$ is indeed the desired sequence since it is increasing and converges to $\lambda$. 
\end{proof}

\subsection{Transform to make the limit exist}
Beside the transformation in Proposition~\ref{prop:insert-0-at-odd}, we also present the following transformation, as an application of Theorem~\ref{thm:limsup=sup}. While the former transformation makes the limit not exist, the latter ensures the opposite.

\begin{proposition}\label{prop:keep-rate-limit-ensured}
	For every bilinear system $(*,s)$ we can construct $(*',s')$ so that $(*',s')$ has the same growth rate as $(*,s)$ and the limit of $\sqrt[n]{g'(n)}$ exists, where $g'(n)$ is the function for $(*',s')$.
\end{proposition}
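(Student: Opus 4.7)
The plan is to construct $(*',s')$ satisfying the hypothesis of the theorem just above (every coordinate $g'_i(n)$ is positive for all $n\ge n_0$), while preserving the growth rate $\lambda$. That theorem then yields directly that $\lim\sqrt[n]{g'(n)}=\lambda$. First I would dispose of the trivial case $\lambda=0$: then $\sqrt[n]{g(n)}\le\lambda=0$ forces the limit to exist and equal zero, so one may take $(*',s')=(*,s)$. Henceforth assume $\lambda>0$, and by Theorem \ref{thm:limsup=sup} fix a linear pattern $P^*$ of some length $k$ and an index $i^*$ with $M(P^*)_{i^*,i^*}>0$; by supermultiplicativity of $a_n^{(i)}=\sup_{|P|=n}M(P)_{i,i}$ and the extension of Fekete's lemma, $\sqrt[n]{a_n^{(i^*)}}$ converges to $\lambda$ along its positive subsequence when $i^*$ is chosen to maximize $\sup_n\sqrt[n]{a_n^{(i)}}$.

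I would place $(*',s')$ in $\mathbb{R}^{d+K}$ for a finite $K$ depending on $k$, with a block of auxiliary coordinates supplementing a faithful embedding of $(*,s)$. The embedded copy of $(*,s)$ furnishes $\lambda'\ge\lambda$ immediately. The $K$ auxiliary coordinates are engineered to take positive values for every sufficiently large $n$ in each residue class modulo $k$, by combining powers of $P^*$ with small fixed-size trees spanning the residues, and by spreading the contributions across all coordinates so that the positivity hypothesis is met simultaneously at every index $i$. A constant ``tag'' auxiliary coordinate that stays equal to $1$ at every node, together with a decaying auxiliary coordinate used to gate the bilinear injections, confines the cumulative contribution to each $g'_i(n)$ to $Cn^{O(1)}\lambda^n$, delivering $\lambda'\le\lambda$. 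Combining the two inequalities yields $\lambda'=\lambda$, and the preceding theorem then completes the proof that the limit exists for $(*',s')$.

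The main obstacle is avoiding combinatorial amplification of the auxiliary injections. A naive, unconditional injection of a fixed positive vector $\alpha$ at every bilinear step unwinds, via the recursion $f(T)=f(T_1)*f(T_2)+\alpha$, into a sum of values $v(T')$ over all ``contractions'' $T'$ of the tree $T$, a set whose cardinality satisfies $N(T_1*T_2)=1+N(T_1)N(T_2)$ and hence grows as $\sim 2^n$ on balanced trees of $n$ leaves, lifting the growth rate to at least $2\lambda$. The tag-and-decay mechanism is precisely what keeps the family of contributing contractions polynomially bounded rather than exponential, so that the growth rate is preserved. Once this gating is in place and one verifies both the $Cn^{O(1)}\lambda^n$ upper bound on every $g'_i(n)$ and the positivity of each $g'_i(n)$ for $n\ge n_0$, the theorem preceding this proposition finishes the argument.
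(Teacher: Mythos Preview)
Your proposal is a plan rather than a proof, and the central mechanism is never written down. Two concrete gaps stand out. First, a ``tag auxiliary coordinate that stays equal to $1$ at every node'' already forces $g'_{\text{tag}}(n)=1$ for all $n$, hence $\lambda'\ge 1$; this destroys the equality $\lambda'=\lambda$ whenever $\lambda<1$, before any gating is even discussed. Second, to invoke the preceding theorem you need $g'_i(n)>0$ for \emph{every} index $i$ and all large $n$, including the original $d$ coordinates. Forcing those to be positive requires feeding auxiliary coordinates back into them, and that is precisely where the combinatorial amplification you warn about bites. You assert that the tag-and-decay gate confines the extra contribution to $Cn^{O(1)}\lambda^n$, but the gate is never specified and the bound never argued; even without any injection the envelope supplied by the proof of Theorem~\ref{thm:limsup=sup} is only $n^{O((\log n)^r)}\lambda^n$, so the $n^{O(1)}$ claim is already too optimistic.

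The paper avoids all of this by \emph{not} aiming for the hypothesis of the preceding theorem. It leaves the first $d$ coordinates completely untouched and adds just two more: coordinate $d{+}2$ equals $\alpha^n$ identically for some fixed $0<\alpha\le\lambda$, and a ``collector'' coordinate $d{+}1$ with $(x*'y)_{d+1}=\sum_{i\le d}x_i\,y_{d+2}$. The collector lets any value realised by the original system on $m$ leaves be padded to an arbitrary $n>m$ at the cost of only a bounded factor, giving $g'(n)\ge \const\cdot g(n-\delta)$ for every $n$ and bounded $\delta$. One then proves $\liminf_n\sqrt[n]{g'(n)}\ge \sqrt[|P|]{M(P)_{i,i}}$ directly for every linear pattern $P$ and index $i$, and concludes $\liminf\ge\lambda$ via the supremum formula of Theorem~\ref{thm:limsup=sup}; together with the easy $\limsup\le\lambda$ this yields the limit. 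No injection into the original coordinates, no feedback control, and no appeal to the preceding theorem are needed.
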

\begin{proof}
    We assume $\lambda>0$, otherwise it is trivial. (Note that $\lambda>0$ if and only if the dependency graph has a cycle.) 
    
	For the space $\mathbb R^d$ of $(*,s)$, consider $*':\mathbb R^{d+2}\times\mathbb R^{d+2}\to\mathbb R^{d+2},\ s'\in\mathbb R^{d+2}$ so that the coefficients of $*$ and the entries of $s$ are brought to the first $d$ dimensions of $(*',s')$. We let $s'_{d+1}=s'_{d+2}=\alpha$ where $0<\alpha\le\lambda$. We can take any positive lower bound of $\lambda$, e.g. by Theorem~\ref{thm:limsup=sup}. (In fact, the value of $s'_{d+1}$ does not matter.)
	The operator $*'$ is defined so that
	\[
		(x*'y)_{d+1}=\sum_{i=1}^d x_i y_{d+2}
	\]
	and
	\[
		(x*'y)_{d+2}=x_{d+2} y_{d+2}.
	\]

	The $(d+2)$-th entry of any vector obtained from combining $n$ instances of $s'$ is $\alpha^n$. It follows that for any index $i$ and any $\delta\ge 1$, the largest possible $(d+1)$-th entry $g'_{d+1}(n)$ over all combinations of $n$ instances of $s'$ satisfies
	\[
		g'_{d+1}(n+\delta)\ge\alpha^\delta g_i(n)
	\]
	by considering the composition tree where the left branch is associated with $g_i(n)$ and the right branch is any tree of $\delta$ leaves.
 This means that for a bounded $\delta$, we have
	\begin{equation}\label{eq:g-prime-greater-g-bounded-distance}
		g'(n+\delta)\ge g'_{d+1}(n+\delta)\ge\max_i\alpha^\delta g_i(n)=\alpha^\delta g(n).
	\end{equation}

 On the other hand,
 \[
    g'_{d+1}(n)\le d \max_{1\le i\le d} \max_{1\le \delta\le n-1} \alpha^\delta g_i(n-\delta),
 \]
 which implies
 \[
    \limsup_{n\to\infty} \sqrt[n]{g'_{d+1}(n)} \le \max\{\alpha, \limsup_{n\to\infty} \sqrt[n]{\max_i g_i(n)\}} = \max\{\alpha, \lambda\}=\lambda.
 \]
It follows that
	\[
		\limsup_{n\to\infty} \sqrt[n]{g'(n)}\le\lambda.
	\]

	For any linear pattern $P$ with the associated matrix $M$ and any index $i$, we prove that
	\begin{equation}\label{eq:lower-bound}
		\liminf_{n\to\infty} \sqrt[n]{g'(n)}\ge \sqrt[|P|]{M_{i,i}}.
	\end{equation}

	Indeed, we pick a fixed $n_0$ so that $g_i(n_0)>0$ with the associated tree $T_0$. For any $n$ large enough, we write $n=q|P|+ n_0 + r$ so that $1\le r\le |P|$. Consider the pattern $P^q$ with the associated matrix $M^q$. Since $(M^q)_{i,i}\ge (M_{i,i})^q$, the $i$-th entry of the vector associated to $P^q\oplus T_0$ is at least a constant times $(M_{i,i})^q$. As $r$ is bounded and $P^q\oplus T_0$ has $q|P|+n_0$ leaves, it follows from \eqref{eq:g-prime-greater-g-bounded-distance} that
	\[
		g'(n)\ge \alpha^r g(n-r)\ge\const g_i(q|P|+n_0)\ge\const(M_{i,i})^q.
	\]
	As $n-q|P|$ is bounded, we have proved \eqref{eq:lower-bound}. It follows that
	\[
		\liminf_{n\to\infty} \sqrt[n]{g'(n)}\ge\sup_{\text{linear pattern $P$}}\max_i \sqrt[|P|]{M(P)_{i,i}} = \lambda,
	\]
	where the equality is due to Theorem~\ref{thm:limsup=sup}.
 
	In total, we have the limit
	\[
		\lim_{n\to\infty} \sqrt[n]{g'(n)}=\liminf_{n\to\infty} \sqrt[n]{g'(n)}=\limsup_{n\to\infty} \sqrt[n]{g'(n)}=\lambda. \qedhere
	\]
\end{proof}

\paragraph{Another (but only conditional) approach to proving the undecidability of limit checking.}
Assuming Conjecture~\ref{con:rho>=1} holds, that is checking $\rho\ge 1$ and checking $\lambda\ge 1$ are undecidable, we give another approach to the undecidability of the problem of checking if the limit of $\sqrt[n]{g(n)}$ exists, as an application of Proposition~\ref{prop:keep-rate-limit-ensured}.

Given a system $(*,s)$, let the system $(*',s')$ obtained from Proposition~\ref{prop:keep-rate-limit-ensured} be in the space $\mathbb R^{d'}$. Consider $*'':\mathbb R^{d'+2}\times\mathbb R^{d'+2}\to\mathbb R^{d'+2}$ and $s''\in\mathbb R^{d'+2}$ where the first $d'$ dimensions are brought from $(*',s')$. We let $s''_{d'+1}=1$, $s''_{d'+2}=0$, and
\[
	(x*''y)_{d'+1}=x_{d'+2} y_{d'+2},\qquad (x*''y)_{d'+2}=x_{d'+1} y_{d'+1}.
\]

We can see that the last $2$ dimensions are independent of the remaining dimensions, and $\max\{g''_{d'+1}(n), g''_{d'+2}(n)\}$ is $0$ if $n$ is divisible by $3$ and it is $1$ otherwise, where $g''$ is the function for $(*'',s'')$. It follows that 
\begin{align*}
    \limsup_{n\to\infty} \sqrt[n]{g''(n)} &= \max\left\{\limsup_{n\to\infty} \sqrt[n]{\max\{g''_{d'+1}(n), g''_{d'+2}(n)\}},\limsup_{n\to\infty} \sqrt[n]{g'(n)}\right\} \\
    &= \max\{1,\lambda\}.
\end{align*}
Meanwhile,
\[
    \liminf_{n\to\infty} \sqrt[n]{g''(n)} \ge \liminf_{n\to\infty} \sqrt[n]{g'(n)} = \lim_{n\to\infty} \sqrt[n]{g'(n)} = \lambda,
\]
and since $g''(3m)=g'(3m)$ for any $m$, we have
\[
    \liminf_{n\to\infty} \sqrt[n]{g''(n)} \le \liminf_{m\to\infty} \sqrt[3m]{g''(3m)} = \liminf_{m\to\infty} \sqrt[3m]{g'(3m)} = \lim_{n\to\infty} \sqrt[n]{g'(n)} = \lambda.
\]
In total,
$\liminf_{n\to\infty} \sqrt[n]{g''(n)} = \lambda$. It follows that
we have reduced the problem of checking $\lambda\ge 1$ to the problem of checking if the limit of $\sqrt[n]{g''(n)}$ exists.
Therefore, the latter problem is undecidable, under the assumption on the undecidability of $\lambda\ge 1$.

\section{Multiple operators and multiple starting vectors}
\label{sec:multiple-operators-vectors}
Rosenfeld \cite{rosenfeld2022undecidable} made a remark that the problem of the bilinear system does not become harder when we allow multiple operators and multiple starting vectors. We confirm this remark.

The construction in Section~\ref{sec:reduction} is well suited for reducing the problem for $(*,\{s,s'\})$ to the original problem. By the problem for $(*,\{s,s'\})$ we mean the problem where we can choose either $s$ or $s'$ in the place of each $s$ instead of fixing the vector $s$. The two vectors $s,s'$ play the roles of $A,B$ in the construction. We rewrite it formally without repeating the verification.

For a bilinear map $*:\mathbb R^d\times\mathbb R^d\to\mathbb R^d$ and two vectors $s,s'\in\mathbb R^d$, consider 
the system $(\bullet,u)$ with the $(d+2)$-dimensional vector $u=(\mathbf{0},1,0)$ where $\mathbf{0}$ is the $d$-dimensional zero vector and $\bullet:\mathbb R^{d+2}\times\mathbb R^{d+2}\to\mathbb R^{d+2}$ presented by
\begin{equation*}
    \begin{pmatrix}
        w\\i\\j
    \end{pmatrix}
    \bullet
    \begin{pmatrix}
        w'\\i'\\j'
    \end{pmatrix}
    =
    \begin{pmatrix}
        w*w' + ij's +ji's'\\
        0\\
        ii'
    \end{pmatrix}
    .
\end{equation*}

By the same analysis as in Theorem~\ref{thm:undecidable-estimate}, the growth rate of $(\bullet,u)$ is the cube root of the growth rate of $(*,\{s,s'\})$.

Using the idea of the previous construction, we can reduce the problem for $(\{*,*'\},s)$ to the original problem. By the problem for $(\{*,*'\},s)$ we mean the problem where we can choose either $*$ or $*'$ in the place of each instance of $*$ instead of fixing $*$.

For two bilinear maps $*,*':\mathbb R^d\times\mathbb R^d\to\mathbb R^d$ and a vector $s\in\mathbb R^d$, consider the system $(\bullet, u)$ with the $(3d+2)$-dimensional vector $u=(s,s,\mathbf{0},1,0)$ where $\mathbf{0}$ is the $d$-dimensional zero vector and $\bullet: \mathbb R^{3d+2}\times\mathbb R^{3d+2}\to\mathbb R^{3d+2}$ presented by
\begin{equation*}
    \begin{pmatrix}
        x\\y\\w\\i\\j
    \end{pmatrix}
    \bullet
    \begin{pmatrix}
        x'\\y'\\w'\\i'\\j'
    \end{pmatrix}
    =
    \begin{pmatrix}
        w*w'\\
        w*'w'\\
        jx'+yj'\\
        0\\
        ii'
    \end{pmatrix}
    .
\end{equation*}

We sketch the approach: For any vector $v$ obtained from combining $n$ instances of $u$ using $\bullet$, if $v_{[2d+1,3d]}\ne 0$ then $n=5k+3$ for some $k$. Also, if $v_{[1,d]}$ or $v_{[d+1,2d]}$ is not a zero vector, then $n=5k+1$ for some $k$. The growth rate of $(\bullet, u)$ is the fifth root of the growth rate of $(\{*,*'\},s)$. The verification is similar to that in Theorem~\ref{thm:undecidable-estimate} and we leave it to the readers.

A construction for a higher number of starting vectors or a higher number of bilinear operators, or both, can be established similarly by introducing more dimensions. We leave it to the readers as an exercise since the details would be tedious.

In conclusion, introducing more vectors and more operators does not make the problem any harder.

\section{Undecidability of checking $\lambda\le 1$ in the positive setting}
\label{sec:undecidable-positive-setting}
As we can reduce the problem of checking $\rho\le 1$ for the joint spectral radius $\rho$ to the problem of checking $\lambda\le 1$ for the growth of bilinear maps in the nonnegative setting, one may wonder if there is a similar reduction for the positive setting, where all the entries of $s$ have to be positive. In this section, we give such a reduction, which implies the undecidability of checking $\lambda\le 1$ in the positive setting, by the undecidability of the following problem.
\begin{theorem}[Rote 2024]\label{thm:undecidable-jsr-positive}
	It is undecidable to check $\rho(\{A,B\})\le 1$ for the joint spectral radius $\rho$ of a pair of \emph{positive} matrices $A,B$.
\end{theorem}
The undecidability follows from \cite[Theorem~$4$]{rote2024probabilistic}, which is a variant of the problem Probabilistic Finite Automaton Emptiness. Discussions on the reduction can be found in \cite[Section $3.2$]{rote2024probabilistic}.

The reduction from the problem in Theorem~\ref{thm:undecidable-jsr-positive} to the problem of bilinear systems is almost the same as the one in Section~\ref{sec:reduction} but with some ideas of the reduction in Section~\ref{sec:mortality} and a more complicated argument.

We reuse the convention of embedding a matrix into a vector in Section~\ref{sec:reduction}. For a pair of $d\times d$ \emph{positive} matrices $A,B$, we consider the system $(*,s)$ with the $(d^2+2)$-dimensional vector $s=(\mathbf E, 1, \epsilon)$ where $\mathbf E$ denotes\footnote{$\mathbf E$ here is the capital version of $\epsilon$, for the mnemonic purpose.} the $d\times d$ matrix with all entries set to $\epsilon$ and $\epsilon>0$ is small enough. The operator $*:\mathbb R^{d^2+2}\times \mathbb R^{d^2+2}\to \mathbb R^{d^2+2}$ is presented by
\begin{equation}\label{eq:presentation-of-*}
    \begin{pmatrix}
        C\\i\\j
    \end{pmatrix}
    *
    \begin{pmatrix}
        C'\\i'\\j'
    \end{pmatrix}
    =
    \begin{pmatrix}
        CC' + ji'X + ij'Y \\
        0\\
        ii'
    \end{pmatrix}
    ,
\end{equation}
where $X,Y,\epsilon$ satisfy some requirements that are given in \eqref{eq:requirements} below.

Let us analyze some beginning combinations of $s$:
\begin{align*}
    s&=(\mathbf E, 1, \epsilon),\\
    s*s&=(\mathbf E^2 + \epsilon X + \epsilon Y,0,1),\\
    (s*s)*s&=((\mathbf E^2 + \epsilon X + \epsilon Y)\mathbf E + X,0,0),\\
    s*(s*s)&=(\mathbf E(\mathbf E^2 + \epsilon X + \epsilon Y) + Y, 0,0)\\
    (s*s)*(s*s)&=((\mathbf E^2 + \epsilon X + \epsilon Y)^2,0,0).
\end{align*}

We need $X,Y$ be so that
\begin{gather}\label{eq:requirements}
\begin{split}
	(s*s)*s=(A,0,0),\\
	s*(s*s)=(B,0,0),\\
	X\ge 0,\qquad Y\ge 0.
\end{split}
\end{gather}
The requirements $X\ge 0, Y\ge 0$ are for the coefficients of $*$ to be nonnegative.\footnote{Note that $X,Y$ can be chosen to be positive. However, the positivity is not useful in case one wishes to make all the coeffcients of $*$ to be positive (there are already some zero coefficients in the representation of $*$ in \eqref{eq:presentation-of-*}).}
\begin{proposition}
    Such $X,Y$ always exist for any $\epsilon$ small enough.
\end{proposition}
\begin{proof}
The first two conditions in \eqref{eq:requirements} are 
\begin{align*}
    (\mathbf E^2 + \epsilon X + \epsilon Y)\mathbf E + X &= A,\\
    \mathbf E(\mathbf E^2 + \epsilon X + \epsilon Y) + Y &= B.
\end{align*}
They are actually $2d^2$ linear equations of a system, where the entries of $X$ and $Y$ are $2d^2$ unknowns.
Since $\epsilon=0$ has obviously a unique solution $X=A,Y=B$, the solution must
be also unique for any small enough $\epsilon$ (by the continuity of the system
determinant).
The solution of $X,Y$ also satisfies $X\ge 0$ and $Y\ge 0$ when $\epsilon$ is small enough.
\end{proof}

Let $\Gamma(v)$ be the matrix form of the first $d^2$ dimensions of a vector $v$. 
Denote $M_1=\Gamma(s)=\mathbf E$ and $M_2=\Gamma(s*s)=\mathbf E^2 + \epsilon X + \epsilon Y$, we have both $M_1<\mathbf E'$ and $M_2 < \mathbf E'$ where $\mathbf E'$ is the matrix of all entries $\epsilon'$ that depends on $\epsilon$. The value $\epsilon'$ can be made arbitrarily small by reducing $\epsilon$.

We make the following observation, whose verification is simple and left to the readers.
\begin{proposition}
	The matrix form $\Gamma(v)$ for any vector $v$ obtained by combining $n$ instances of $s$ is the product of some matrices from $\{A,B,M_1,M_2\}$. In particular, if $m_A,m_B,m_1,m_2$ are respectively the numbers of instances of $A,B,M_1,M_2$, then $m_1 + 2m_2 + 3(m_A+m_B) = n$. On the other hand, for any product of $m$ matrices from $\{A,B\}$, we have a combination for $n=3m$ so that $\Gamma(v)$ is the product.
\end{proposition}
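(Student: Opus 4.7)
The plan is to mimic the inductive argument of Proposition~\ref{prop:buffer-analysis}, now treating $M_1$ and $M_2$ as atomic factors for subtrees of one and two leaves, in addition to $A, B$ for subtrees of three leaves. First I would check the three base cases directly: at $n = 1$ the definition gives $\Gamma(s_{R_C}) = \Gamma(\epsilon) = M_1$; at $n = 2$ the computation already displayed yields $\Gamma((s*s)_{R_C}) = M_2$; and at $n = 3$ the matrices $X, Y$ were chosen precisely so that the two three-leaf combinations produce $A$ and $B$. In each of these cases the weighted count $m_1 + 2m_2 + 3(m_A + m_B)$ equals $n$ tautologically.

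For the inductive step at $n \ge 4$, the key observation, parallel to the first step in the proof of Proposition~\ref{prop:buffer-analysis}, is that the mixing summands $x_j y_{R_A}$ and $x_{R_B} y_j$ in the definition of $(x*y)_{R_C}$ vanish. Indeed $v_{R_A}$ and $v_{R_B}$ are nonzero only on single-leaf subtrees, while $v_j$ is nonzero only on subtrees of one or two leaves; hence $x_j y_{R_A}$ forces $n_1 \in \{1, 2\}$ together with $n_2 = 1$, so $n \le 3$, and symmetrically for the other summand. For $n \ge 4$ both terms disappear, so $\Gamma(v_{R_C}) = \Gamma(x_{R_C}) \Gamma(y_{R_C})$; applying the induction hypothesis to each factor gives a product over $\{A, B, M_1, M_2\}$, and the count equation is preserved by summing the relations from the two halves with $n_1 + n_2 = n$.

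For the converse statement I would induct on $m$: at $m = 1$ the combinations $(s*s)*s$ and $s*(s*s)$ realize $A$ and $B$ respectively; for $m \ge 2$, if $T_1$ is a combination of $3(m-1)$ leaves realizing $A_1 \cdots A_{m-1}$ and $T_2$ a three-leaf combination realizing $A_m$, then $T_1 * T_2$ has $3m \ge 6 \ge 4$ leaves, so the inductive step above shows it realizes $A_1 \cdots A_m$. The only non-routine aspect of the whole argument is the $n = 3$ base case, which rests on the particular choice of $X, Y$ whose existence and smallness have already been granted in the excerpt; everything beyond that is straightforward bookkeeping.
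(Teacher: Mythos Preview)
Your argument is correct and is exactly what the paper has in mind: it explicitly states that the verification ``is similar to that of Proposition~\ref{prop:buffer-analysis} and left to the readers,'' and your write-up carries out precisely that analogy, with the only new ingredients being the modified base cases $n\le 3$ (handled by the choice of $X,Y$) and the observation that now $v_j\ne 0$ for $n\in\{1,2\}$ rather than only $n=2$. Your check that the mixing summands still vanish for $n\ge 4$ and the bookkeeping for the weight relation are both sound.
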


Since $\epsilon'$ can be made arbitrarily small, the numbers $m_1,m_2$ should be made minimal to maximize the norm of $v$. It follows that $\lambda=\sqrt[3]{\rho(\{A,B\})}$ like in Theorem~\ref{thm:undecidable-estimate}. Therefore, the problem of checking $\lambda\le 1$ is undecidable in the positive setting.
\begin{remark}
    In contrast to the situation in Section~\ref{sec:reduction}, the limit of $\sqrt[n]{g(n)}$ here exists, because this is always the case for a positive setting by Corollary~\ref{cor:lim-exists}.
\end{remark}

\subsection*{Acknowledgments}
The author would like to thank G\"unter Rote for interesting and helpful discussions, and Matthieu Rosenfeld for suggesting various meaningful questions. Part of the work was included in the PhD thesis of the author done at Freie Universit\"at Berlin under the supervision of G\"unter Rote. The author also appreciates the two anonymous referees for their detailed and helpful comments on the manuscript.

\bibliographystyle{unsrt}
\bibliography{gbm3}

\end{document}